\definecolor{mycolor}{rgb}{0.122, 0.435, 0.698}
\theoremstyle{plain} 
\newtheorem{teo}{Theorem}
\newtheorem{lem}{Lemma}
\newtheorem{prop}{Proposition}
\newtheorem{cor}{Corollary}
\theoremstyle{definition}
\theoremstyle{remark}
\newtheorem{obs}{Remark}
\newtheorem{ej}{Example}
\newcommand{\parentesis}[1]{\left(#1\right)}
\newcommand{\conjunto}[1]{\left\lbrace #1 \right\rbrace}
\newcommand{\mapeo}[5]{
	\begin{eqnarray*}
		#1:#2 & \longrightarrow & #3\\
		#4 & \longmapsto & #5
\end{eqnarray*}}
\newcommand{\cms}{$(X,\textrm{d})$ }
\newcommand{\todon}{n\in\mathbb{N}}
\renewcommand{\epsilon}{\varepsilon}
\newcommand{\diam}{\textrm{diam}}
\newcommand{\hiper}{2^{X_d}_u}
\newcommand{\hiperf}{2^{X_d}_f}
\newcommand{\hipernf}{2^{\mathbb{N}}_f}
\newcommand{\texfor}[1]{\enspace\textrm{#1}\enspace}
\newcommand{\comp}[2]{\left(#1\right)_{#2}}
\newcommand{\subjclass}[2][2010]{%
	\let\@oldtitle\@title%
	\gdef\@title{\@oldtitle\footnotetext{#1 \emph{Mathematics Subject Classification.} #2}}%
}
\newcommand{\keywords}[1]{%
	\let\@@oldtitle\@title%
	\gdef\@title{\@@oldtitle\footnotetext{\emph{Key words and phrases.} #1.}}%
}
\title{\bf{A universal space for finite topological spaces}}
\author{Diego Mondéjar}
\date{}                     
\keywords{Universal spaces, Alexandroff and finite spaces, hyperspaces, upper semifinite topology, inverse limits}
\subjclass{Primary 54B20, 54C25; Secondary 54A10, 54A05, 54D10, 54D35}
\begin{document}
	
\maketitle

	\begin{abstract}
		We find universal spaces for Alexandroff and finite spaces and explore some of its topological properties as well as their description as inverse limits of finite spaces and Alexandroff extensions. They can be used as a natural environment to describe shape properties of compact metric spaces.
	\end{abstract}

\section{Introduction}
Finite and, more generally, Alexandroff spaces are getting more attention in topology research because of their use and applications in the fields of Applied, Computational and Digital Topology. Alexandroff \cite{Adiskrete} first noticed that Alexandroff $T_0$ spaces were essentially \textsc{poset}s. Their topological properties are very special and they were not very studied during some time. Although they have only a finite set of points, they are far from discrete and enjoy interesting topological properties. According W. Thurston \cite{Tonp}, the study of finite topological spaces is "an oddball topic that can lend good insight to a variety of questions". Stong \cite{Sfinite} and McCord \cite{Msingular} found interesting properties about its homeomorphism and homotopy types, and stablished a deep conection with simplicial complexes, showing that they have the same weak homotopy types. In May notes \cite{Mfinitetopological,Mfinitecomplexes}, these results are presented in a common framework, deriving in questions, conjectures and promising future work. In this context, Barmak and Minian \cite{BMsimple,BMautomorphism,Balgebraic} developed a whole algebraic topology theory for finite and Alexandroff spaces (see also Kukie{\l}a \cite{Kon} and Hardie and Vermeulen \cite{HVhomotopy} works about the homotopy of general Alexandroff spaces). Hyperspaces are a way of building new topological spaces out from one given as the set of closed subsets with different possible topologies (see \cite{Nhyperspaces}). In this paper we focus on finding universal spaces for Alexandroff spaces and, specially, for some finite topological spaces and for this we use hyperspaces. 

The motivation of this paper is twofold. On the one hand, in \cite{MGupper, MGhomotopical, MGthe}, Morón and Gómez uses embeddings for every Tychonov space in its hyperspace with the upper semifinite topology. They relate properties of the space with properties of the hyperspace using that, although the topology of the hyperspace is non-Hausdorff, it is very easy to manipulate. They also describe a special neighborhood system for the embedding of a compact metric space in its upper semifinite hyperspace to get results in the shape theory for compacta. On the other hand, in \cite{MMreconstruction}, the authors showed (Theorem 4) that every compact metric space is homeomorphic to a subspace of the inverse limit of finite topological spaces (actually, upper semifinite hyperspaces of discrete approximations), being the inverse limit an homotopical copy of the space with a homeomorhic copy as a strong deformation retract. The generalization for the hyperspace is also true, namely that the hyperspace of the compact metric space with the upper semifinite topology is homotopically equivalent to an inverse sequence of finite topological hyperspaces (Theorem 20 of \cite{phMondejar}). Also, in \cite{Mpolyhedral}, it is shown that the associated inverse sequence of polyhedra (by means of the Alexandroff-McCord functor \cite{Msingular}) represents the shape type of the original space. Combining this results, we try to answer some related questions. What classes of spaces can be embedded in hyperspaces of discrete spaces with the upper semifinite topology? Can these spaces be written as inverse limits of finite spaces? Can we use them as a universal environment to detect shape properties?

We begin with Section \ref{sec:Hyperspaces}, where we define hyperspaces of discrete ones that are Alexandroff and $T_0$. Then, we consider the subspace of the previous one consisting of finite elements and called the power of finite sets. It is shown in Proposition \ref{prop:contractible} that it is contractible and we determine its homeomorphisms in Proposition \ref{prop:bijection}. Its Alexandroff extension is a subspace of a hyperspace (Proposition \ref{prop:extension}) and can be written as an inverse limit of finite hyperspaces, which is shown in Theorem \ref{teo:inverseextension}. In Section \ref{sec:embeddings}, we show in Proposition \ref{prop:alexembeddings} that the power of finite sets is able to embed every $T_0$ Alexandroff space (and hence $T_0$ finite) that is locally finite and, as Corollary \ref{cor:finiteuniversal} we find that the power of finite sets os natural numbers is universal for finite $T_0$ spaces. We use the Alexandroff-Mc-Cord correspondence to embed spaces with the same weakly homotopy type that an Alexandroff one as an open subset in these hyperspaces in Proposition \ref{prop:weakly}. Section \ref{sec:simplicial} is devoted to define a category equivalent (Corollary \ref{cor:equivalent}) to the category of abstract simplicial complexes and simplicial maps, using the previous results. Finally, we propose a shape type description using subspaces of the power of finite sets, in Proposition \ref{prop:univshape}, making it universal for shape of compact metric spaces.
\section{Hyperspaces of discrete spaces}\label{sec:Hyperspaces}
An \emph{Alexandroff space} $X$ (sometimes called A-space) is one in which every intersection of open sets is open, hence they have \emph{minimal neighborhoods} for every point, written $B_x$ for every $x\in X$. Finite topological spaces are obviously Alexandroff. We recommend \cite{Mfinitetopological, Mfinitecomplexes} and the main references for Alexandroff and finite topological spaces. The set of minimal neighborhoods $B_x$ for every point of the space $x\in X$ is a base for the topology called the \emph{minimal basis}. This minimal basis defines a reflexive and transitive relation on the space $X$. For $x,y\in X$, say $x\leqslant y$ if $B_x\subset B_y$. On the other hand, every reflexive and transitive relation on a set $X$ determines an Alexandroff topology, with basis the sets $U_x=\{y\in X:y\leqslant x\}$. So, we have that Alexandroff topologies in a set $X$ are in bijective correspondence with its reflexive and transitive relations. The topology is $T_0$ if and only if the relation is a partial order. We call a set with a partial order a \caps{poset}, so Alexandroff $T_0$ spaces and \caps{poset}s are the same thing. In what follows we will use both points of view without distinction. With this notation, continuous maps between them are easily characterized. A function $f:X\rightarrow Y$ of Alexandroff spaces is continuous if and only if is order preserving, that is, $x\leqslant y$ implies $f(x)\leqslant f(y)$. Although they can have interesting topological structure, their separation properties are poor: an Alexandroff space that is $T_1$ is discrete. From the works of Stong \cite{Sfinite} and \cite{Msingular}, we know that every Alexandroff space is homotopically equivalent to an Alexandroff $T_0$ space so, up to homotopy, we can reduce our study to them. Also, we can relate simplicial complexes with Alexandroff $T_0$ spaces by the \emph{Alexandroff-McCord correspondence} between them, which states than they are homotopically weakly equivalent. We will review it later.

For every topological space $X$, let $2^X$ be the set of non-empty closed subsets of $X$, called the \emph{hyperspace} of $X$. The \emph{upper semifinite topology} in $2^X$ is generated by the base $$B(U)=\{C\in2^X: C\subset U\},\enspace\text{$U$ open in $X$}$$ and we write $2^X_{u}$ for the hyperspace of $X$ with the upper semifinite topology \cite{Nhyperspaces}.

First of all, we give the following result about hyperspaces of Alexandroff spaces with this topology.
\begin{prop}
	For every Alexandroff space $X$, the hyperspace $2^X_{u}$ is an Alexandroff space.
\end{prop}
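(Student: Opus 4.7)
The plan is to use the equivalent characterization of Alexandroff spaces in terms of minimal open neighborhoods: a space is Alexandroff if and only if every point has a smallest open neighborhood. So my goal is to exhibit, for each $C\in 2^X_u$, a minimal basic open set $B(U)$ containing $C$.

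The first step is to produce a candidate minimal open set of $X$ containing $C$. Since $X$ is Alexandroff, for every $x\in C$ there is a minimal open neighborhood $B_x$. Define
$$U_C=\bigcup_{x\in C}B_x,$$
which is open as a union of open sets and contains $C$. I would then verify that $U_C$ is in fact the smallest open set of $X$ containing $C$: if $V$ is any open set with $C\subset V$, then for each $x\in C$, $V$ is an open neighborhood of $x$, so $B_x\subset V$ by minimality of $B_x$; taking the union over $x\in C$ gives $U_C\subset V$.

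The second step is to transfer this to the hyperspace. I claim $B(U_C)$ is the minimal open neighborhood of $C$ in $2^X_u$. Clearly $C\in B(U_C)$ since $C\subset U_C$. Conversely, let $\mathcal{O}$ be any open neighborhood of $C$ in $2^X_u$. By definition of the upper semifinite topology, there exists an open set $U$ of $X$ with $C\in B(U)\subset \mathcal{O}$, i.e.\ $C\subset U$. By the previous step, $U_C\subset U$, and hence $B(U_C)\subset B(U)\subset \mathcal{O}$. Thus $B(U_C)$ is contained in every open neighborhood of $C$, so it is the minimal one, and $2^X_u$ is Alexandroff.

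I do not expect a serious obstacle here; the argument is essentially bookkeeping once one notices that the operations $C\mapsto U_C$ and $U\mapsto B(U)$ are both monotone with respect to inclusion. The only mild subtlety is to remember that elements of $2^X_u$ are \emph{closed} subsets of $X$, but closedness plays no role in the above argument since we only use that $C$ is a subset of $X$.
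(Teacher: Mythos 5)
Your proof is correct and follows essentially the same idea as the paper's: take the union $U_C=\bigcup_{x\in C}B_x$ of the minimal neighborhoods of the points of $C$ and observe that monotonicity of $U\mapsto B(U)$ makes $B(U_C)$ minimal among basic neighborhoods of $C$. In fact your formulation is the cleaner one, since the paper writes the candidate minimal neighborhood as $\bigcup_{j\in J}B(B_{x_j})$ rather than $B\left(\bigcup_{j\in J}B_{x_j}\right)$, and the former need not even contain $C$ unless $C$ lies inside a single $B_{x_j}$; your version repairs that slip.
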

\begin{proof}
	As an Alexandroff space, every point $x\in X$ has a minimal neighborhood $B_x$. Now, consider a point $C\in 2^X_u$. Assume that $C=\{x_j\}_{j\in J}$, where $x_j\in X$. For every $j\in J$, consider the open neighborhood $B(B_{x_j})$ in $2^X_u$. Then we claim that $$B_C=\bigcup_{j\in J}B(B_{x_j})$$ is the minimal neighborhood of $C$. Consider any basic open neighborhood $B(U)$ of $C$. Then, $C\subset U$, so $x_j\in U$ for every $j\in J$, and then, $B_{x_j}\subset U$. Then it is clear, for every $j\in J$, that $B(B_{x_j})\subset U$ and hence $B_C\subset B(U))$.
\end{proof}
\begin{ej}
	The converse of this proposition is not true, even for $T_0$ Alexandroff spaces, as shown in the following example: Consider the unit interval $I=[0,1]$ with the topology having as proper open sets the half intervals $[0,t)$ with $t\in(0,1)$. Now consider the subspace $X=\{\frac{1}{n}:n\in\mathbb{N}\}\cup\{0\}$ with the subspace topology. It is a $T_0$ non-Alexandroff space, since the (of course infinite) intersection of open sets $$\bigcap_{t\in(0,1)}\left([0,t)\cap X\right)=\{0\}$$ is not an open set. But, it turns out that the hyperspace $2^X_u$ is Alexandroff. Every proper closed set is of the form $X\backslash[0,t)$, with $t\in(0,1)$, i.e., $X_n=\conjunto{\frac{1}{n},\frac{1}{n-1},\ldots,1}$ with $\todon$. For every $\todon$, the only open set containing $X_n$ is $X$, so every point of $2^X_u$ has $B(X)=2^X_u$ as minimal neighborhood.
\end{ej}

Let $X_d$ be a set with the discrete topology. We consider the hyperspace of non-empty subsets with the upper semifinite topology $\hiper$. If $X$ is discrete, then it is an Alexandroff space, and we already know that the hyperspace is Alexandroff. We prove it explicitly for this case, in order to find the minimal basis and we also show that the hyperspace is also $T_0$.
\begin{prop}
	For every set $X$, the space $\hiper$ is a $\text{T}_0$ (and not necessarily $T_1$) Alexandroff space.
\end{prop}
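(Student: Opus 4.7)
The plan is to exhibit an explicit minimal neighborhood for each point of $\hiper$ and then to leverage it to verify the $T_0$ axiom directly. The crucial observation is that, because $X$ is discrete, every subset of $X$ is open, so in particular every $C \in \hiper$ is itself an open set in $X_d$. This means $B(C) = \{D \in \hiper : D \subset C\}$ is a basic open set of the upper semifinite topology, and obviously $C \in B(C)$.

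First I would show that $B(C)$ is the minimal neighborhood of $C$. Any open neighborhood of $C$ contains a basic open set $B(U)$ with $C \in B(U)$, that is, with $C \subset U$. But then any $D \subset C$ satisfies $D \subset U$, so $B(C) \subset B(U)$. Hence $B(C)$ is contained in every neighborhood of $C$, and in particular the (possibly infinite) intersection of all neighborhoods of $C$ equals $B(C)$ and is itself open. Since this holds for every $C$, the space $\hiper$ is Alexandroff. This argument is really a specialization of the previous proposition, but with the advantage that the minimal basis element takes the very simple form $B_C = B(C)$ rather than a union $\bigcup_{j} B(B_{x_j})$, which will be convenient later.

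Next, for the $T_0$ property, I would pick two distinct points $C, D \in \hiper$. Distinctness of subsets means one of $C \not\subset D$ or $D \not\subset C$ holds; say $C \not\subset D$. Then $B(D)$ is an open neighborhood of $D$ that does not contain $C$ (otherwise $C \subset D$, contradiction), and this distinguishes the two points topologically.

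The main obstacle, if any, is purely notational: one must keep straight the distinction between a point $C$ of the hyperspace, the basic open set $B(C)$ it generates, and the ambient set $X$. Once $C$ is recognised as an open subset of $X_d$ (by discreteness), both the Alexandroff property and the $T_0$ property fall out immediately with no further machinery.
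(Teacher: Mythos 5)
Your proof is correct and follows essentially the same route as the paper: both identify $2^C=B(C)$ as the minimal neighborhood of $C$ (using that $C$ is open in the discrete space $X_d$) and then use these minimal neighborhoods to separate distinct points. Your $T_0$ step is just a slightly streamlined version of the paper's case analysis, observing directly that $C\neq D$ forces one of the non-inclusions and that the corresponding basic neighborhood does the separating.
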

\begin{proof}
	To show that $\hiper$ is Alexandroff, we need to find, for every point $C$ of the space, a minimal neighborhood $B_C$. Every subset of $X$ is open with the discrete topology, so the basis element associated to the open set $C$ is $B(C)=2^C$. It is easy to see that $$B_C=\bigcap_{C\subset U\text{ open in } X}B(U)=B(C),$$ so it contains $C$ and it is contained in any open neighborhood of $C$. Hence $\hiper$ is an Alexandroff space with minimal neighborhoods $2^C$ for every $C\in\hiper$. It is $\text{T}_0$ because, for every pair of different points $C,D\in\hiper$, there are two possibilities. If $C\varsubsetneq D$, then $C\in B_C\not\ni D$. If $C\not\subset D$ and $D\not\subset C$ then we have both $C\in B_C\not\ni D$ and $C\not\in B_D\in D$. The space $\hiper$ is not generally $\text{T}_1$ because for every two different points with $C\subset D$, every neighborhood of $D$ contains $B_D\supset B_C\ni C$.
\end{proof}
\begin{obs}
	As a $T_0$ Alexandroff space, the partial order induced in $\hiper$ is the following: for every $C,D\in\hiper$, $C\leqslant D$ if and only if $C\subseteq D$. Moreover, the minimal neighborhood for a point $C\in\hiper$ is $2^C$.
\end{obs}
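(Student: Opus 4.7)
The remark asserts two things: first, that the induced partial order on the $T_0$ Alexandroff space $\hiper$ is nothing but set inclusion; and second, that the minimal open neighborhood of a point $C$ is the power set $2^{C}$. My plan is to observe that the second assertion was already established inside the proof of the previous proposition, where $B_{C}=2^{C}$ was pinned down as the smallest open set containing $C$. So no further argument is required for that half; it is really just being restated here for ease of reference.

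For the order statement, I would appeal to the standard Alexandroff--McCord dictionary: in any $T_0$ Alexandroff space, the induced partial order can be characterized by $x\leqslant y\iff B_{x}\subseteq B_{y}$ (the convention under which minimal elements are those with the smallest minimal neighborhoods, as used by Barmak--Minian and the other references cited in the introduction). Since $\hiper$ was shown to be $T_0$ Alexandroff in the previous proposition, this dictionary applies and reduces the problem to establishing the equivalence
\[
2^{C}\subseteq 2^{D}\iff C\subseteq D.
\]

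That last equivalence is the only thing to verify, and it is essentially immediate: the forward direction is clear because every subset of $C$ is a subset of $D$ as soon as $C\subseteq D$; the converse follows by noting that $C\in 2^{C}$, hence $C\in 2^{D}$, i.e.\ $C\subseteq D$. I do not anticipate any real obstacle — the remark is a book-keeping consequence of the previous proposition translated into order-theoretic language, and its function is to fix notation and the order convention for the remainder of the paper.
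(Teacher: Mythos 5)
Your proposal is correct and matches the paper's (implicit) justification: the remark carries no separate proof, since the identification $B_C=2^C$ was already established in the preceding proposition, and the order statement follows from the standard convention $C\leqslant D\iff B_C\subseteq B_D$ (the same convention the paper later uses in relation (1)) together with the immediate equivalence $2^C\subseteq 2^D\iff C\subseteq D$. Your verification of that equivalence via $C\in 2^C$ is exactly the intended book-keeping.
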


We now consider the \emph{power of finite sets of $X$}, that is, $$\hiperf=\conjunto{C\in\hiper:\text{card}(C)\enspace\text{is finite}}\subset\hiper$$ which receives the subspace topology, so $$\conjunto{B(U)\cap\hiperf:\text{$U$ open in $X$}}$$ is a basis for its topology. As a subspace, $\hiperf$ is a $T_0$ Alexandroff space, with minimal neighborhoods $2^C$,  for each $C\in\hiperf$. Define, for every $r\in\mathbb{N}$, the set $$2^{X_d}_r=\conjunto{C\in 2^{X_d}_f:\text{card}(C)\leqslant r}$$ of points of the hyperspace $\hiper$ with a bounded (by $r$) number of elements.
\begin{obs}
	For every $r\in\mathbb{N}$, the space $2^{X_d}_r$ is an open subset of $2^{X_d}_f$. This is so because, for every $C\in 2^{X_d}_r$, its minimal neighborhood $2^C$ is contained in the space $2^{X_d}_r$. Moreover, for every pair $r\leqslant s$, we have the inclusion $2^{X_d}_r\subset 2^{X_d}_s$.
\end{obs}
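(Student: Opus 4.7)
The plan is to handle the two assertions of the remark separately, with the openness claim carrying all the content. The monotonicity $2^{X_d}_r \subset 2^{X_d}_s$ for $r \leq s$ is immediate from the definition, since any $C$ with $\text{card}(C) \leq r$ automatically satisfies $\text{card}(C) \leq s$, so I would just dispatch this in one line.

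For the openness of $2^{X_d}_r$ inside $\hiperf$, I would invoke the standard characterization that a subset of an Alexandroff space is open precisely when it contains the minimal neighborhood of each of its points. Since the preceding discussion identifies the minimal neighborhood of any $C \in \hiperf$ as $2^C$, the task reduces to showing $2^C \subseteq 2^{X_d}_r$ whenever $C \in 2^{X_d}_r$. Fixing such a $C$, any $D \in 2^C$ satisfies $D \subseteq C$, which forces $\text{card}(D) \leq \text{card}(C) \leq r$ and hence $D \in 2^{X_d}_r$. This gives the required containment of minimal neighborhoods, and the openness follows.

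There is no genuine obstacle here: the whole argument is a routine bookkeeping check leveraging the description of minimal neighborhoods in $\hiperf$ established in the previous remark, together with the elementary fact that passing to a subset cannot increase cardinality. The only point worth stating explicitly is the Alexandroff criterion for openness, which is what makes the verification on minimal neighborhoods sufficient rather than only necessary.
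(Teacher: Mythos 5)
Your argument is correct and matches the paper's reasoning exactly: the paper likewise justifies openness by noting that the minimal neighborhood $2^C$ of any $C\in 2^{X_d}_r$ lies in $2^{X_d}_r$ (since subsets cannot have larger cardinality), and treats the inclusion $2^{X_d}_r\subset 2^{X_d}_s$ as immediate. You merely make explicit the Alexandroff openness criterion that the paper leaves implicit, which is a fair amount of detail for a remark of this kind.
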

\begin{ej}In general, the space $\hiperf$ is not $T_1$. For example, for the set of natural numbers $\mathbb{N}$, we have that $\hipernf$ is not $T_1$ because, for example, the minimal neighborhood of $\{1,2,3\}$ contains $\{1,2\}$. 
\end{ej}

We study some topological properties of these hyperspaces. 

A subspace $A$ of $X$ is a \emph{deformation retract} of $X$ if there is a map (homotopy) $H:X\times I\rightarrow X$ such that for all $x\in X$, $a\in A$ and $t\in I$ the map $H$ satisfies $H(x,0)=x$ and $H(x,1)\in A$. The deformation retract is also called \emph{strong} if, moreover, $H(a,t)=a$. A topological space is \emph{contractible} if it has the homotopy type of a point. Deformation retracts preserve homotopy, that is, if a point is a deformation retract of a space, it is contractible.

\begin{prop}\label{prop:contractible}
For every set $X$, every point of the space $\hiperf$ is a strong deformation retract of it and hence it is contractible.
\end{prop}
\begin{proof}Let $A$ be any point of $\hiperf$. We will prove that $\hiperf$ can be retracted to $A$. The map $H:\hiperf\times[0,1]\rightarrow\hiperf$ defined by 
	\begin{equation*}
		H(C,t) = \left\{
		\begin{array}{rl}
			C & \text{if } t\in[0,\frac{1}{2}),\\
			C\cup A & \text{if } t=\frac{1}{2},\\
			A & \text{if } t\in(\frac{1}{2},1]
		\end{array}\right.
	\end{equation*}
	is an homotopy satisfying $H(C,0)=C$ and $H(C,1)=A$, so $A$ is a strong deformation retract of $\hiperf$. Indeed, we shall prove that this map is continuous everywhere. Let $(C,t)\in\hiperf\times[0,1]$.
	\begin{itemize}
		\item If $t\in\left[0,\frac{1}{2}\right)$ then $H(C,t)=C$. Let $V$ be a neighborhood of $C$, we know that $C\in B_C\subset V$. The neighborhood $U=\hiperf\times\left[0,\frac{1}{2}\right)$ of $(C,t)$ satisfies $H(U)=C\in V$. 
		\item If $t\in\left(\frac{1}{2},1\right]$ then $H(C,t)=C$. Let $V$ be any neighborhood of $A$, we have that $A\in B_A\subset V$. The neighborhood of $(C,t)$ given by $U=\hiperf\times\left(\frac{1}{2},1\right]$ satisfies $H(U)=A\in V$.
		\item Finally, $H(C,\frac{1}{2})=C\cap A$. For any neighborhood $V$ of $C\cap A$ we can claim that $C\cap A\in B_{C\cap A}\subset V$ so the image of the neighborhood of $(C,\frac{1}{2})$ given by $U=\hiper\times[0,1]$ satisfies $H(U)=C\cap A\in V$.
	\end{itemize}
\end{proof}
This is quite non-evident since this space is highly non-homogeneous as we will see. A topological space $X$ is said to be \emph{homogeneous} if, for every two points $x,y\in X$, there is a homeomorphism $f:X\rightarrow X$ such that $f(x)=y$. In other words, the group of self homeomorphisms of $X$ is transitive in $X$.

We next characterize the homeomorphisms of our space in order to measure its inhomogeneity. To do so, we use the \emph{elevation} $2^g$ of maps between topological spaces $g:X\rightarrow Y$ to hyperspaces (with any topology), defined as $2^g:2^X\rightarrow 2^Y$ with $2^g(C)=\overline{g(C)}$. Note that, if $X,Y$ are discrete, we have can define it merely as $2^g(C)=g(C)$. The following result is stated in Proposition 2.8 of \cite{MGupper}.
\begin{prop}\label{prop:homeos}
	A map between Tychonov spaces $g:X\rightarrow Y$ is a homeomorphism if and only if so it is the corresponding elevation between the corresponding hyperspaces with the upper semifinite topology $2^g:2^X\rightarrow 2^Y$.
\end{prop}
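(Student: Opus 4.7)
The plan is to prove the two implications separately, with the reverse being where the work lies. For the forward direction, I would observe that a homeomorphism $g$ sends closed sets to closed sets, so the closure in $2^g(C)=\overline{g(C)}$ becomes redundant and $2^g$ reduces to $C\mapsto g(C)$. The obvious candidate for its inverse is $2^{g^{-1}}$, and continuity of both maps follows from the identity $(2^g)^{-1}(B(V))=B(g^{-1}(V))$ for every open $V\subset Y$, which is a direct unwinding of the definitions.

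For the reverse direction, assume $2^g$ is a homeomorphism. The key idea is to recover $g$ by restricting to singletons. Since Tychonov spaces are $T_1$, singletons are closed, so $\{x\}\in 2^X$ and $2^g(\{x\})=\overline{\{g(x)\}}=\{g(x)\}$. Injectivity of $2^g$ restricted to singletons forces injectivity of $g$. For surjectivity, given $y\in Y$ there is some $C\in 2^X$ with $\overline{g(C)}=\{y\}$; since $C$ is non-empty and $Y$ is $T_1$, $g(C)$ is a non-empty subset of $\{y\}$ and hence equals $\{y\}$, so $y\in g(X)$. Thus $g$ is a set-theoretic bijection.

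To show $g$ is continuous, I would take $V\subset Y$ open, observe that $(2^g)^{-1}(B(V))$ is open in $2^X_u$, and for each $x\in g^{-1}(V)$ find a basic open $B(U)$ containing $\{x\}$ inside this preimage; then every $x'\in U$ satisfies $\{x'\}\in B(U)$, hence $\{g(x')\}\subset V$, hence $x'\in g^{-1}(V)$, so $g^{-1}(V)$ is open. Continuity of $g^{-1}$ is dual, using that $2^g$ is an open map: for $U\subset X$ open the image $2^g(B(U))$ is open and, combined with the singleton argument, forces $g(U)$ to be open in $Y$. The main obstacle is the presence of the closure in the definition $2^g(C)=\overline{g(C)}$, which in the reverse direction makes it non-trivial to transfer information about $2^g$ on general closed sets to information about $g$ on points; restricting to singletons and exploiting the $T_1$ axiom of Tychonov spaces is the device that circumvents this, and it is also what rules out pathological examples where a non-homeomorphism could induce a homeomorphism of hyperspaces in the absence of $T_1$.
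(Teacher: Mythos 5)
Your proof is correct. Note that the paper itself does not prove this proposition: it is quoted verbatim from Proposition 2.8 of the Mor\'on--G\'omez paper \cite{MGupper}, so there is no in-text argument to compare against. Judged on its own, your argument is sound and self-contained. The forward direction via the identity $(2^g)^{-1}(B(V))=B(g^{-1}(V))$ (valid because a homeomorphism carries closed sets to closed sets, so the closure in $\overline{g(C)}$ is vacuous) together with the explicit inverse $2^{g^{-1}}$ is exactly what one expects. In the reverse direction, the restriction to singletons is the right device: Tychonov implies $T_1$, so $\{x\}\in 2^X$ and $2^g(\{x\})=\{g(x)\}$, from which injectivity and (using that a non-empty set with closure a singleton is that singleton) surjectivity of $g$ follow; continuity of $g$ and of $g^{-1}$ then drop out of continuity and openness of $2^g$ by passing basic open sets $B(U)$, $B(W)$ through the singleton identification, as you describe. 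The only point worth making explicit when you write this up is the step showing $g$ is open: given $y'\in W$ with $B(W)\subset 2^g(B(U))$, you get some closed non-empty $C\subset U$ with $\overline{g(C)}=\{y'\}$, hence $y'\in g(C)\subset g(U)$; this is the ``combined with the singleton argument'' you allude to, and it does go through.
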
 
In our case, the homeomorphisms correspond to bijections between the spaces.
\begin{prop}\label{prop:bijection}
	Let $X$ be any set. Then a function $f:\hiperf\rightarrow\hiperf$ is a homeomorphism if and only if there exists a bijection $\gamma:X\rightarrow X$ such that $f=2^{\gamma}$. That is, the homeomorphism group of $\hiperf$ is isomorphic to the group of permutations of $\text{card}(X)$ elements.
\end{prop}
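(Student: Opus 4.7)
The statement has two directions, and the ``if'' direction is essentially immediate from Proposition \ref{prop:homeos}: a bijection $\gamma:X\to X$ is trivially a homeomorphism of the discrete (hence Tychonov) space $X_d$, so $2^\gamma: 2^{X_d}_u \to 2^{X_d}_u$ is a homeomorphism, and it restricts to a homeomorphism of $\hiperf$ because $\gamma$ preserves cardinality.

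For the non-trivial ``only if'' direction, the plan is to recover the underlying bijection of $X$ from the action of $f$ on singletons, using only the order structure. Fix a self-homeomorphism $f$ of $\hiperf$. Being a $T_0$ Alexandroff space, $\hiperf$ has its topology completely encoded by its specialization order, which (as noted in the remark above) is just set inclusion; in particular $f$ is automatically an order isomorphism of $(\hiperf,\subseteq)$. The minimal elements of this poset are exactly the singletons, since the empty set is excluded, so $f$ restricts to a bijection of the singletons onto themselves, and I would define $\gamma : X \to X$ by the rule $f(\{x\}) = \{\gamma(x)\}$.

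To conclude that $f = 2^\gamma$ on all of $\hiperf$, I would observe that each finite nonempty set $C = \{x_1,\dots,x_n\}$ is the join of the singletons $\{x_1\},\dots,\{x_n\}$ in $(\hiperf,\subseteq)$, and any order isomorphism preserves all existing joins. Hence
$$f(C) \;=\; f(\{x_1\}) \cup \cdots \cup f(\{x_n\}) \;=\; \{\gamma(x_1),\dots,\gamma(x_n)\} \;=\; 2^\gamma(C),$$
as required.

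For the group-theoretic assertion, the assignment $\gamma \mapsto 2^\gamma$ is clearly a group homomorphism from the symmetric group on $X$ into the homeomorphism group of $\hiperf$; surjectivity is precisely what was just proved, and injectivity is immediate since $2^\gamma = \mathrm{id}$ forces $\gamma$ to fix every singleton. The main obstacle, or rather the only conceptually non-routine step, is the passage from topological self-homeomorphism to order isomorphism via the Alexandroff $T_0$ structure; after that, everything is a direct reconstruction from atoms via finite joins.
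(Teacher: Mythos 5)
Your proof is correct and follows essentially the same route as the paper's: recover $\gamma$ from the action of $f$ on singletons and then show that $f(C)$ is the union of the images of the singletons of $C$. Your order-theoretic phrasing (singletons as the minimal elements, general finite sets as joins of singletons preserved by any order isomorphism) is just the specialization-order translation of the paper's direct arguments via continuity of $f$ and $f^{-1}$ on minimal neighborhoods, the two being interchangeable for $T_0$ Alexandroff spaces.
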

\begin{proof}
	This is a direct consecuence of Proposition \ref{prop:homeos},  because every set with the discrete topology is a Tychonov space. In this particular situation, the proof is simpler, as shown.
	
	Consider we have a bijection $\gamma$ of $X$ and define $f:\hiperf\rightarrow\hiperf$ as the elevation $f=2^{\gamma}$. Then, for every $C\in\hiperf$, we have $f(C)=\cup_{c\in C}\gamma(c)$. The map $f$ is continuous and open, since, for every $C\in\hiperf$, we have $$f(2^C)=\bigcup_{D\subset C}\parentesis{\bigcup_{c\in D}\gamma(c)}=2^{\bigcup_{c\in C}\gamma(c)}=2^{f(C)}.$$ It is clearly injective. If $C\neq D$, let us suppose that there exists $d\in D\backslash C$. Then $\gamma(d)\in f(D)\backslash f(C)$. Finally, $f$ is surjective: For every $C\in\hiperf$, $C=\bigcup_{c\in\gamma^{-1}(C)}\gamma(c)=f(\gamma^{-1}(C))$. We conclude that $f$ is a homeomorphism. On the other hand, let $f:\hiperf\rightarrow\hiperf$ be a homeomorphism. Consider a point of $\hiperf$ consisting of only one point of $X$, that is $\{x\}\in\hiperf$. Let us write $f(\{x\})=C\in\hiperf$. Then, $f^{-1}$ is a continuous map sending $C$ to $\{x\}$, so $f^{-1}(2^C)\subset2^{\{x\}}=\{x\}$. But $f^{-1}$ must be injective so $\text{card}\parentesis{2^C}=1$, hence $C=\{y\}$ with $y\in X$. That means there exists a function $\gamma:X\rightarrow X$ such that $f(\{x\})=\gamma(x)$ for every $x\in X$. This function must be a bijection, since $f$ is. Now, let us consider $C\in\hiperf$. Since $f$ is continuous, we have $\gamma(c)=f(\{c\})\subset f(C)$ for every $c\in C$, that is, $\bigcup_{c\in C}\gamma(c)$. On the other hand, since $f^{-1}$ is continuous, for every $d\in f(C)$ we have $\gamma^{-1}(d)=f^{-1}(\{d\})\subset C$, and hence $d\in\bigcap_{c\in C}\gamma(c)$. We conclude $f(C)=\bigcap_{c\in C}\gamma(c)$, i.e., $f=2^{\gamma}$.
\end{proof}
As an immediate corollary we obtain 
\begin{cor}
	Let $X$ be any set and consider $C,D\in\hiperf$. Then there exists a homeomorphism $f:\hiperf\rightarrow\hiperf$ with $f(C)=D$ if and only if $\text{card}(C)=\text{card}(D)$.
\end{cor}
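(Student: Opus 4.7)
The plan is to read off both directions from Proposition \ref{prop:bijection}, which identifies homeomorphisms of $\hiperf$ with elevations $2^\gamma$ of bijections $\gamma:X\to X$. The forward direction is immediate, while the reverse requires extending a bijection between $C$ and $D$ to a self-bijection of $X$.

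For the \emph{only if} direction, suppose $f:\hiperf\to\hiperf$ is a homeomorphism with $f(C)=D$. By Proposition \ref{prop:bijection}, there is a bijection $\gamma:X\to X$ such that $f=2^\gamma$, and therefore $D=f(C)=\gamma(C)$. Since $\gamma$ is injective on $X$, the restriction $\gamma|_C:C\to\gamma(C)$ is a bijection, whence $\card(C)=\card(D)$.

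For the \emph{if} direction, assume $\card(C)=\card(D)$. Pick any bijection $\sigma:C\to D$. To promote this to a self-bijection of $X$, I need to see that $\card(X\setminus C)=\card(X\setminus D)$: if $X$ is finite this is automatic from $\card(C)=\card(D)$ and counting, and if $X$ is infinite then both complements have cardinality $\card(X)$ since $C$ and $D$ are finite. Choose any bijection $\tau:X\setminus C\to X\setminus D$ and glue to get a bijection $\gamma:X\to X$ extending $\sigma$. Applying Proposition \ref{prop:bijection} again, $f:=2^\gamma$ is a homeomorphism of $\hiperf$, and by construction $f(C)=\gamma(C)=\sigma(C)=D$.

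There is no real obstacle here beyond the bookkeeping about complements in the extension step; the entire content of the statement is packaged in Proposition \ref{prop:bijection}, and the corollary is essentially a translation of ``transitivity on sets of equal cardinality'' for the permutation action of $X$ on its finite subsets.
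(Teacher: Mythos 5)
Your proposal is correct and follows essentially the same route as the paper: both directions are read off from Proposition \ref{prop:bijection}, with the reverse direction obtained by gluing a bijection $C\to D$ with one between the complements. Your extra remark justifying $\text{card}(X\setminus C)=\text{card}(X\setminus D)$ is a small point the paper leaves implicit, but the argument is otherwise identical.
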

\begin{proof}
	If $f:\hiperf\rightarrow\hiperf$ is a homeomorphism then, by the previous proposition, there exists a bijection $\gamma:X\rightarrow X$ such that $f=2^{\gamma}$. It is clear that the elevation of a bijection must preserve the cardinal of the elements.
	
	The opposite implication is straightforward because it is always possible to extend bijections to sets with the same cardinal. If $\text{card}(C)=\text{card}(D)$, then we can define two bijections $\alpha:C\rightarrow D$ and $\beta:X\backslash C\rightarrow X\backslash D$. Now we can define a bijection $\gamma:X\rightarrow X$ on the whole set by
	\begin{equation*}
		\gamma(x)= \left\{
		\begin{array}{rl}
			\alpha(x) & \text{if } x\in C,\\
			\beta(x) & \text{if } x\in X\backslash C.
		\end{array}\right.
	\end{equation*}
	Since it is a bijection, then $f=2^{\gamma}$ is a homeomorphism sending $C$ to $D$, as required.
\end{proof}
\begin{obs}\label{rem:bij}
	From the previous proof we can deduce that there exist exactly $\text{card}(C)!\cdot\text{card}(X\backslash C)!$ different homemorphisms (the combination of possible choices for the bijections $\alpha$ and $\beta$) sending $C$ to $D$.
\end{obs}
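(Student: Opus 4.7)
The plan is to combine Proposition \ref{prop:bijection} with the explicit gluing construction used in the preceding corollary, reducing the count of homeomorphisms to a count of bijections of $X$ that carry $C$ onto $D$. By Proposition \ref{prop:bijection}, every homeomorphism $f:\hiperf\to\hiperf$ is of the form $f=2^{\gamma}$ for a unique bijection $\gamma:X\to X$, the uniqueness coming from the identity $f(\{x\})=\{\gamma(x)\}$ used in that proof to recover $\gamma$ from $f$. Consequently, the assignment $\gamma\mapsto 2^{\gamma}$ is a bijection between the group of permutations of $X$ and the homeomorphism group of $\hiperf$.

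Under this correspondence, the condition $f(C)=D$ translates into $2^{\gamma}(C)=\gamma(C)=D$, i.e.\ $\gamma(C)=D$ as subsets of $X$. Since $\gamma$ is a bijection of $X$, this is equivalent to requiring simultaneously that $\gamma$ restricts to a bijection $\alpha:C\to D$ and to a bijection $\beta:X\setminus C\to X\setminus D$. Conversely, given any such pair $(\alpha,\beta)$, the piecewise formula for $\gamma$ displayed in the proof of the previous corollary produces a bijection of $X$ with $\gamma(C)=D$, and the pair $(\alpha,\beta)$ is recovered from $\gamma$ by restriction. Hence the homeomorphisms $f:\hiperf\to\hiperf$ with $f(C)=D$ are in one-to-one correspondence with pairs of bijections $C\to D$ and $X\setminus C\to X\setminus D$.

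It remains to count these pairs. Assuming $\text{card}(C)=\text{card}(D)$ (otherwise no such homeomorphism exists, by the corollary), the number of bijections $C\to D$ is $\text{card}(C)!$ and the number of bijections $X\setminus C\to X\setminus D$ is $\text{card}(X\setminus C)!$, understood as the cardinality of the corresponding symmetric group when $X$ is infinite. Multiplying gives the claimed $\text{card}(C)!\cdot\text{card}(X\setminus C)!$.

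The only genuine subtlety is making sure the correspondence between homeomorphisms and bijections of $X$ is strictly one-to-one, so that no overcounting occurs; this is exactly the content of the uniqueness argument inside the proof of Proposition \ref{prop:bijection}, and once that is cited the remainder is a straightforward combinatorial bookkeeping.
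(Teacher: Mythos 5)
Your argument is correct and follows exactly the route the remark itself indicates: you reduce the count to pairs $(\alpha,\beta)$ via the correspondence $\gamma\mapsto 2^{\gamma}$ from Proposition \ref{prop:bijection}, and you supply the one detail the paper leaves implicit, namely that this correspondence is injective (recovering $\gamma$ from $f(\{x\})=\{\gamma(x)\}$), so no overcounting occurs. Nothing further is needed.
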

\begin{obs}
	The last proposition and its corollary remain true if we replace $\hiperf$ with $\hiper$. Nothing in the proofs actually changes.
\end{obs}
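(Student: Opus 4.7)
The plan is to verify that every step of the proofs of Proposition \ref{prop:bijection} and its corollary goes through verbatim when $\hiperf$ is replaced by $\hiper$, since no argument there uses the finiteness of the chosen subsets in an essential way. So the task reduces to a careful rereading, flagging any place where $\text{card}(C)<\infty$ might have entered silently.

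For the proposition itself, the structural facts are identical in both hyperspaces: every point $C$ has minimal neighborhood $2^C$, the partial order is inclusion, and the singletons $\{x\}$ are precisely the points with a one-element minimal neighborhood in the hyperspace. Given a bijection $\gamma:X\to X$, the identity $f(2^C)=2^{f(C)}$ that establishes continuity and openness of $f=2^{\gamma}$ is a purely set-theoretic equality, valid for arbitrary $C\subseteq X$; the verifications of injectivity and surjectivity likewise never invoke finiteness. Conversely, if $f:\hiper\to\hiper$ is a homeomorphism, the argument that $f(\{x\})$ must be a singleton uses only the one-point minimal neighborhood of $\{x\}$ together with injectivity of $f^{-1}$, and the double inclusion yielding $f(C)=\bigcup_{c\in C}\gamma(c)$ is obtained by applying continuity of $f$ and $f^{-1}$ element-by-element on singletons, so it holds for $C$ of any cardinality.

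For the corollary, the forward direction (elevations preserve cardinalities) is again immediate, since $2^{\gamma}(C)=\gamma(C)$ has the same cardinal as $C$. In the reverse direction, given $C,D\in\hiper$ with $\text{card}(C)=\text{card}(D)$, one fixes a bijection $\alpha:C\to D$ and seeks a companion $\beta:X\backslash C\to X\backslash D$; gluing them yields the required $\gamma$. The one point where a brief check is needed is the existence of $\beta$, i.e.\ that $\text{card}(X\backslash C)=\text{card}(X\backslash D)$. I expect this small set-theoretic bookkeeping to be the only real obstacle: it is automatic when $X$ is finite (by subtraction) and when $X$ is infinite with $\text{card}(C),\text{card}(D)<\text{card}(X)$ (by cardinal absorption), while the remaining cases can be handled directly by a short cardinal arithmetic argument. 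Once $\beta$ is produced, the remainder of the proof is identical to the original, and Remark \ref{rem:bij} transfers along with it, the factorials being read as cardinalities of the relevant symmetric groups.
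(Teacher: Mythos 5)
Your reading of Proposition \ref{prop:bijection} transfers correctly: neither direction of that proof uses finiteness of the point $C$, the identity $f(2^C)=2^{f(C)}$ is purely set-theoretic, and the singleton/minimal-neighborhood argument characterizing $f$ as $2^{\gamma}$ works for arbitrary subsets. This part matches what the paper tacitly intends by ``nothing changes.''

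The problem is in the corollary, at exactly the spot you isolated and then waved away. In the remaining case ($X$ infinite and $\text{card}(C)=\text{card}(D)=\text{card}(X)$) there is no ``short cardinal arithmetic argument'': the needed equality $\text{card}(X\backslash C)=\text{card}(X\backslash D)$ simply fails in general. Take $X=\mathbb{N}$, $C=\mathbb{N}$ and $D$ the set of even numbers; then $\text{card}(C)=\text{card}(D)=\aleph_0$ but $X\backslash C=\emptyset$ while $X\backslash D$ is infinite, so no bijection $\beta:X\backslash C\rightarrow X\backslash D$ exists. Moreover this is not just a failure of one proof strategy: by the extended proposition every self-homeomorphism of $\hipern$ is $2^{\gamma}$ for a permutation $\gamma$ of $\mathbb{N}$ and therefore fixes the point $\mathbb{N}$, so no homeomorphism carries $C=\mathbb{N}$ to $D$. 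Hence the corollary in the form ``if and only if $\text{card}(C)=\text{card}(D)$'' is false for $\hiper$; the correct equivalence requires $\text{card}(C)=\text{card}(D)$ \emph{and} $\text{card}(X\backslash C)=\text{card}(X\backslash D)$ (a condition that is automatic when $C,D$ are finite, which is why the original corollary for $\hiperf$ is fine). Your instinct to flag the existence of $\beta$ as the only delicate point was right; the error is asserting it can always be supplied, when in fact it is a genuine counterexample to the remark as stated for the corollary. The same caveat applies to your transfer of Remark \ref{rem:bij}.
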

Hence, by Proposition \ref{prop:bijection} and Remark \ref{rem:bij}, we have that the only relevant information of the set $X$ that is kept in the hyperspaces $\hiper$ and $\hiperf$ is the cardinality. That is, we have the following
\begin{teo} Let $X,Y$ be sets with cardinalities $\omega_X$ and $\omega_Y$ respectively. Then, the following are equivalent:
	\begin{itemize}[noitemsep,nolistsep]
		\item [(i)] $\omega_X=\omega_Y$.
		\item [(ii)] $\hiper$ is homeomorphic to $2^{Y_d}_u$.
		\item [(iii)] $\hiperf$ is homeomorphic to $2^{Y_d}_f$.
	\end{itemize}
\end{teo}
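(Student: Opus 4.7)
The plan is to prove the cycle of implications $(i)\Rightarrow(ii)$, $(i)\Rightarrow(iii)$, $(ii)\Rightarrow(i)$, $(iii)\Rightarrow(i)$, leaning on Proposition \ref{prop:bijection} and on the poset description of $T_0$ Alexandroff spaces.

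For $(i)\Rightarrow(ii)$ and $(i)\Rightarrow(iii)$, I would start from a bijection $\gamma\colon X\to Y$ and form its elevation $2^\gamma\colon \hiper\to 2^{Y_d}_u$ sending $C\mapsto \gamma(C)$. The computation carried out in Proposition \ref{prop:bijection} (which, by the remark just after it, works verbatim for $\hiper$) shows that $2^\gamma$ is a homeomorphism. Because $\gamma$ preserves cardinality, $2^\gamma$ restricts to a homeomorphism $\hiperf\to 2^{Y_d}_f$, giving $(i)\Rightarrow(iii)$ at the same time.

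For the converse implications I would exploit the order structure. Both $\hiper$ and $\hiperf$ are $T_0$ Alexandroff spaces, so a homeomorphism between them is equivalent to an order isomorphism for the partial order $C\leqslant D\Leftrightarrow C\subseteq D$ identified in the remark after Proposition 2. Since only non-empty subsets are considered, the \emph{minimal} elements of this poset are exactly the singletons $\{x\}$, $x\in X$, and these are in natural bijection with $X$. A homeomorphism $f\colon\hiper\to 2^{Y_d}_u$ (respectively $\hiperf\to 2^{Y_d}_f$) must send minimal elements to minimal elements bijectively, so it induces a bijection between the singletons of the two sides and hence between $X$ and $Y$, giving $\omega_X=\omega_Y$.

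There is no real obstacle here; the whole statement is essentially packaging. The only point that deserves care is the backward direction: one should not try to recover the cardinality from the total cardinality of the hyperspace (which for infinite $X$ can be much larger than $\omega_X$) but from a topologically invariant \emph{subset} of the hyperspace. The cleanest such invariant is the set of minimal points of the induced poset, whose cardinality coincides with $\omega_X$.
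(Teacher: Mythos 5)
Your proposal is correct and follows essentially the same route as the paper: the forward direction is the elevation $2^{\gamma}$ of a bijection $\gamma\colon X\to Y$, and the backward direction recovers $\omega_X$ from the topologically invariant set of singletons, which is exactly the mechanism in the proof of Proposition \ref{prop:bijection} (where singletons are detected via one-point minimal neighborhoods, i.e.\ as the minimal/open points you describe). Your explicit remark that one must use the set of minimal points rather than the raw cardinality of the hyperspace is a sensible precaution, particularly for the full hyperspace $\hiper$.
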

Despite this fact, we will use the set notation instead of dealing just with cardinalities for the sake of simplicity.

Local finiteness is a property of topological spaces closely related to Alexandroff spaces. A topological space $X$ is called \emph{locally finite} if, for every $x\in X$, there exists a finite neighborhood $x\in U\subset X$. We will say that an Alexandroff space $A$ is \emph{strongly locally finite} (defined in \cite{Kon}) if, for every $a\in A$, the set of points related to $a$, that is, $$\conjunto{b\in A:b\leqslant a \text{ or } a\leqslant b},$$ is finite (equivalently, for every $a\in A$, $B_a$ and $\overline{\{a\}}^A$ are finite sets).

\begin{obs}Finite topological spaces are always locally finite. For Alexandroff spaces, strong local finiteness implies local finiteness.
\end{obs}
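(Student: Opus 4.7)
The remark has two parts, each of which is essentially an unpacking of definitions, so the plan is correspondingly short.

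For the first claim, if $X$ is a finite topological space and $x\in X$, then $X$ itself is a finite open neighborhood of $x$. So local finiteness is immediate without any further argument.

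For the second claim, let $A$ be a strongly locally finite Alexandroff space and fix $a\in A$. Since $A$ is Alexandroff, $a$ has a minimal open neighborhood $B_a$, which is in particular an open set containing $a$. The definition of strong local finiteness, together with the parenthetical remark in the definition, tells us that $B_a$ is finite (equivalently, $B_a$ sits inside the finite set $\{b\in A : b\leqslant a\text{ or }a\leqslant b\}$, since every $b\in B_a$ satisfies $b\leqslant a$ in the specialization order used throughout the paper). Hence $B_a$ is itself a finite neighborhood of $a$, which is exactly what local finiteness requires.

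There is no real obstacle here; the only subtlety worth flagging is that one must invoke the parenthetical equivalence in the definition of strong local finiteness to pass from the condition ``the set of points related to $a$ is finite'' to ``$B_a$ is finite,'' after which the conclusion is immediate. I would phrase the argument as a two-sentence remark rather than a full proof.
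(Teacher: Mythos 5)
Your argument is correct and is exactly the definitional unpacking the paper intends: the paper states this remark without proof, and your two observations (that a finite space is its own finite open neighborhood of each point, and that strong local finiteness gives a finite minimal neighborhood $B_a$ via the parenthetical equivalence in the definition, using that $b\in B_a$ means $b\leqslant a$) supply it completely. Nothing further is needed.
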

\begin{obs}
	For every infinite set $X$, the hyperspace $\hiperf$ is not finite, is locally finite but it is not strongly locally finite: For every $C\in\hiperf$, the minimal neighborhood $2^C$ is a finite open set containing $C$. But $C$ is contained in an infinite number of elements of $\hiperf$.
\end{obs}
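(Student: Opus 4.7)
The plan is to verify the three assertions one by one, each following almost immediately from the order-theoretic description of $\hiperf$ that was established earlier in this section (minimal neighborhoods are $2^C$, and the induced partial order is inclusion).

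First, to see that $\hiperf$ is not finite whenever $X$ is infinite, I would simply produce the injection $x \mapsto \{x\}$ from $X$ into $\hiperf$; since $X$ is infinite, $\hiperf$ must be infinite as well.

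Second, to show local finiteness, I would fix $C \in \hiperf$ and use the earlier observation that $2^C$ is its minimal open neighborhood. Because $C$ is by definition a finite subset of $X$, the set $2^C$ has exactly $2^{\card(C)}$ elements, so $C$ admits a finite open neighborhood in $\hiperf$.

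Third, for the failure of strong local finiteness, I would fix any $C \in \hiperf$ and examine the set of points related to $C$ under the partial order. Since $X$ is infinite and $C$ is finite, $X \setminus C$ is infinite; for each $x \in X \setminus C$, the finite set $C \cup \{x\}$ lies in $\hiperf$ and satisfies $C \subsetneq C \cup \{x\}$, i.e., $C \leqslant C \cup \{x\}$ in the induced order. This yields infinitely many points of $\hiperf$ comparable with $C$, so $\overline{\{C\}}^{\hiperf}$ is infinite and strong local finiteness fails. There is no real obstacle here; the only subtle point is to remember that the partial order on $\hiperf$ is set inclusion and that the closure of a point corresponds to the set of its upper bounds, so writing out the failure condition using supersets of $C$ is the natural move.
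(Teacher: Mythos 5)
Your proposal is correct and follows essentially the same reasoning the paper gives inside the remark itself: $2^C$ is a finite minimal open neighborhood (so $\hiperf$ is locally finite), while the sets $C\cup\{x\}$ for $x\in X\setminus C$ show that $\overline{\{C\}}^{\hiperf}$ is infinite, so strong local finiteness fails. The injection $x\mapsto\{x\}$ for non-finiteness is the obvious step the paper leaves implicit, and your identification of the closure of $\{C\}$ with the supersets of $C$ matches the paper's order conventions.
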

It turns out that locally finite spaces are nothing but a special class of Alexandroff spaces.
\begin{prop}\label{teo:localesalexandroff}
	Every topological space is a locally finite space if and only if it is an Alexandroff space with finite minimal neighborhoods. 
\end{prop}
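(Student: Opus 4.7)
The plan is to prove both implications separately, with essentially all of the content in the forward direction.

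For the $(\Leftarrow)$ direction the argument is immediate: if $X$ is Alexandroff with finite minimal neighborhoods, then for each $x \in X$ the set $B_x$ is open (by the Alexandroff property) and finite (by hypothesis), so $x \in B_x \subset X$ exhibits a finite neighborhood of $x$, giving local finiteness.

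For the $(\Rightarrow)$ direction I would fix $x \in X$, choose a finite neighborhood of $x$, and pass to a finite open subset $U$ containing $x$ (every neighborhood contains an open one, and any subset of a finite set is finite). Then I would define $B_x := \bigcap \{V : V \text{ open}, x \in V\}$ as a set and aim to show it is open. The key observation is that since $B_x \subset U$ and $U$ is finite, only finitely many open sets are required to carve $B_x$ out of $U$: for each of the finitely many $y \in U \setminus B_x$ pick, by the very definition of $B_x$, an open $V_y \ni x$ with $y \notin V_y$, and verify by a double inclusion that
$$B_x = U \cap \bigcap_{y \in U \setminus B_x} V_y.$$
This realises $B_x$ as a finite intersection of open sets, hence open. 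This is the only nontrivial step, and the finiteness of $U$ is exactly what turns the a priori arbitrary intersection defining $B_x$ into a finite one.

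With the existence of an open, finite, minimal neighborhood $B_x$ at each point in hand, I would close the argument by checking that $X$ is Alexandroff: given any family $\{U_\alpha\}_\alpha$ of open sets and any $x \in \bigcap_\alpha U_\alpha$, minimality forces $B_x \subset U_\alpha$ for every $\alpha$, hence $\bigcap_\alpha U_\alpha = \bigcup_{x \in \cap_\alpha U_\alpha} B_x$ is a union of open sets, hence open. The finiteness of the $B_x$ then follows since each is contained in the corresponding finite $U$. The only obstacle worth flagging is the technical step of recognising the intersection defining $B_x$ as a \emph{finite} one; everything else in the argument is formal.
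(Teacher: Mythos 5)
Your proof is correct and follows essentially the same route as the paper: both arguments hinge on using a finite open neighborhood $U\ni x$ to exhibit the intersection of all open neighborhoods of $x$ as a \emph{finite} intersection of open sets, hence open and minimal. The only cosmetic difference is that the paper intersects the (finitely many) open sets $B$ with $x\in B\subset U$ directly, whereas you select a separating open set $V_y$ for each $y\in U\setminus B_x$; your version also spells out the converse and the final Alexandroff verification, which the paper leaves implicit.
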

\begin{proof}
	Let $X$ be a locally finite space. Let us consider, for $x\in X$, a finite open neighborhood $x\in U\subset X$. We claim that $$B_U=\bigcap_{x\in B\subset U\texfor{open}}B$$ is the minimal open neighborhood for $x$. Note that it is not empty, because $B_U\subset U\ni x$, and open, because it is a finite intersection (remember $U$ is finite) of open sets. It is the minimal neighborhood of $x$ because, if $V$ is another open neighborhood of $x$, then $V\cap U\subset U$ is an open neighborhood of $x$, and hence $B_U\subset V$. Finally, the construction does not depend on the choice of the finite open neighborhood of $x$. If we use a different one, say $U'$, then $U\cap U'\subset U,U'$, so $B_U\subset U'$ and $B_{U'}\subset U$ which implies, respectively, that $B_{U'}\subset B_U$ and $B_U\subset U'$, so $B_U=B_{U'}=B_x$ is well defined. The converse is obviously true.
\end{proof}
Compactness and paracompactness are easily characterized in these hyperspaces:
\begin{prop}
	Let $X$ be any set. The following statements are equivalent:
	\begin{itemize}[noitemsep,nolistsep]
		\item[(i)] $X$ is finite.
		\item[(ii)]  $\hiperf$ is compact.
		\item[(iii)] $\hiperf$ is paracompact.
		\item[(iv)] $\hiperf$ is strongly locally finite.
	\end{itemize}
\end{prop}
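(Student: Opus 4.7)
My approach is to establish the cycle (i) $\Rightarrow$ (ii) $\Rightarrow$ (iii) $\Rightarrow$ (i) together with (i) $\Leftrightarrow$ (iv). The implications (i) $\Rightarrow$ (ii), (ii) $\Rightarrow$ (iii), and (i) $\Rightarrow$ (iv) are routine: if $X$ is finite then $\hiperf$ is itself a finite topological space, hence compact and (trivially) strongly locally finite; moreover, compactness always implies paracompactness because any finite subcover is itself a locally finite open refinement. The converse (iv) $\Rightarrow$ (i) is already essentially contained in the Remark preceding the statement: if $X$ is infinite, then for any $C \in \hiperf$ the sets $C \cup \{x\}$ with $x \in X \setminus C$ exhibit infinitely many points above $C$ under $\subseteq$, contradicting strong local finiteness at $C$.

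The substantive work is (iii) $\Rightarrow$ (i), which I will prove in contrapositive form. Assume $X$ is infinite and consider the open cover $\mathcal{U} = \{2^C : C \in \hiperf\}$ of $\hiperf$ formed by the minimal neighborhoods. Suppose, toward a contradiction, that $\mathcal{V}$ is a locally finite open refinement of $\mathcal{U}$. By the refinement property, to each $V \in \mathcal{V}$ I may associate a finite $C_V \in \hiperf$ with $V \subseteq 2^{C_V}$. Moreover, since every open set in $\hiperf$ is $\subseteq$-down-closed (the minimal neighborhood of $D$ being exactly $2^D$), whenever a pair $\{x,y\}$ lies in some $V \in \mathcal{V}$ the singleton $\{x\}$ also lies in $V$.

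Now I fix $x \in X$ and, for each $y \in X \setminus \{x\}$, pick $V_y \in \mathcal{V}$ containing $\{x, y\}$. Any single $V \in \mathcal{V}$ can contain $\{x,y\}$ only when $\{x,y\} \subseteq C_V$, so the admissible $y$'s for a fixed $V$ are confined to the finite set $C_V$. Since $X \setminus \{x\}$ is infinite, the assignment $y \mapsto V_y$ must hit infinitely many distinct members of $\mathcal{V}$, each of which contains $\{x\}$ by the down-closed observation. However, the minimal neighborhood of $\{x\}$ in $\hiperf$ is $\{\{x\}\}$, and local finiteness of $\mathcal{V}$ at the point $\{x\}$ forces only finitely many members of $\mathcal{V}$ to meet this one-point neighborhood, i.e., to contain $\{x\}$---a contradiction. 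The principal obstacle is exactly this step: one must combine the down-set structure of the topology with the refinement bound $V \subseteq 2^{C_V}$ to trap $\mathcal{V}$ between two incompatible counts at the singleton $\{x\}$. The remaining implications follow directly from the definitions or the preceding Remark.
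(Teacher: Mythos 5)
Your proof is correct, but it organizes the equivalences differently from the paper. The paper closes a single cycle (i)~$\Rightarrow$~(ii)~$\Rightarrow$~(iii)~$\Rightarrow$~(iv)~$\Rightarrow$~(i): its only substantive step is (iii)~$\Rightarrow$~(iv), where it asserts that paracompactness forces the minimal open covering $\conjunto{2^C:C\in\hiperf}$ to be locally finite, so that each $C$ has only finitely many supersets $D$ (i.e.\ $\overline{\{C\}}^{\hiperf}$ is finite), and then kills (iv) by noting that $C\cup\{x\}$ for $x\in X\backslash C$ gives infinitely many points above $C$ when $X$ is infinite. You instead prove (iii)~$\Rightarrow$~(i) directly and handle (iv) by a separate, easy equivalence with (i). The underlying mechanism is the same in both arguments --- local finiteness of a cover at a point bounds the number of its members containing that point, while the down-set structure of open sets in $\hiperf$ together with the finiteness of each $C_V$ produces infinitely many such members --- but your version is the more carefully justified of the two: the paper's claim that paracompactness makes the \emph{given} minimal cover locally finite is not literally what the definition provides (it provides a locally finite open \emph{refinement}), whereas you work with an arbitrary locally finite refinement $\mathcal{V}$ and extract the contradiction at the singleton $\{x\}$, whose minimal neighborhood is the one-point set $2^{\{x\}}=\{\{x\}\}$. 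What the paper's route buys is brevity and the extra intermediate fact that $\overline{\{C\}}^{\hiperf}$ must be finite under paracompactness; what yours buys is a self-contained and airtight treatment of the paracompactness step. Both are valid.
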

\begin{proof} The implications (i)$\Rightarrow$(ii)$\Rightarrow$(iii) are obvious.
	\begin{itemize}
		\item (iii)$\Rightarrow$(iv) If $\hiperf$ is paracompact, then the minimal open covering $$\conjunto{2^C:C\in\hiperf}$$ must be locally finite, so, for every $C\in\hiperf$, $C\subset D$ for a finite number of points $D\in\hiperf$ (or, in other words, for every $C\in\hiperf$, the closure $\overline{\{C\}}^{\hiperf}$ is finite). Since $2^C$ is always finite too, $\hiperf$ space is strongly locally finite.
		\item (iv)$\Rightarrow$(i) If $X$ was infinite then, for every $C\in\hiperf$, we would have that $X\backslash C$ would be infinite, and then $C\subset C\cup D$ for every $D\in X\backslash C$, making $\overline{\{C\}}^{\hiperf}$ infinite, which is impossible.
	\end{itemize}
\end{proof}
We now construct the Alexandroff extension of $\hiperf$. It is similar to the process of a compactification and they exist for every topological space, although sometimes this name is reserved for the compactification of Hausdorf spaces. We use the general notion, following \cite{Kgen}. Let $X$ be any topological space and $\infty$ any point not in $X$. Consider the set $X^*=X\cup\{\infty\}$ with open sets the open sets of $X$ and the subsets $U\subset X^*$ such that $X^*\backslash U$ is closed and compact in X. The topological space $X^*$ is called the \emph{Alexandroff extension (or one-point compactification)} of $X$.

\begin{prop}[Alexandroff]
	The Alexandroff extension $X^*$ of a topological space $X$ contains it as a subspace and is compact. Moreover, $X^*$ is Hausdorff if and only if $X$ is locally compact and Hausdorff.
\end{prop}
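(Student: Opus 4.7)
The plan is to establish the three assertions separately: that $X$ sits in $X^*$ as a subspace, that $X^*$ is compact, and the Hausdorff equivalence. Throughout, I would rely on the dichotomy built into the definition: every open set of $X^*$ is either an open set of $X$ (missing $\infty$) or a set $U\ni\infty$ whose complement $X^*\setminus U$ is closed and compact in $X$.

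For the subspace claim, I would check that intersecting each kind of $X^*$-open set with $X$ gives an $X$-open set, and conversely that every $X$-open set is already open in $X^*$. The first case is immediate; in the second, $U\cap X=X\setminus(X^*\setminus U)$ is open in $X$ since $X^*\setminus U$ is closed in $X$ by hypothesis. For compactness, given any open cover $\{U_\alpha\}$ of $X^*$, at least one $U_{\alpha_0}$ contains $\infty$, so $K:=X^*\setminus U_{\alpha_0}$ is compact in $X$. The remaining $U_\alpha\cap X$ form an open cover of $K$ in $X$, from which compactness yields a finite subcover; adjoining $U_{\alpha_0}$ produces a finite subcover of $X^*$.

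For the Hausdorff equivalence, the ($\Rightarrow$) direction is the more delicate one. If $X^*$ is Hausdorff, then $X$ inherits this as a subspace. Moreover, given $x\in X$, choose disjoint open $U\ni x$ and $V\ni\infty$ in $X^*$; then $X^*\setminus V$ is closed and compact in $X$ by definition, and contains the open set $U\ni x$, so $x$ has a compact neighborhood in $X$. Hence $X$ is locally compact.

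Conversely, assume $X$ is locally compact Hausdorff. Two points of $X$ are separated by disjoint open sets of $X$, which are open in $X^*$. To separate $x\in X$ from $\infty$, pick a compact neighborhood $K$ of $x$ in $X$; since $X$ is Hausdorff, $K$ is closed in $X$, so by the definition $X^*\setminus K$ is an open neighborhood of $\infty$ in $X^*$, and it is disjoint from the interior of $K$, which contains $x$. The main organizational difficulty — really the only subtlety — is remembering that compactness alone is not enough for $X^*\setminus K$ to be open in $X^*$; one genuinely needs $K$ to be closed in $X$, which is why Hausdorffness of $X$ is indispensable in the ($\Leftarrow$) direction, and symmetrically why the ($\Rightarrow$) direction actually produces a bona fide compact \emph{neighborhood} rather than just a compact set containing $x$.
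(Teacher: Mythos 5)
Your proof is correct and complete: the subspace check, the extraction of a finite subcover after isolating the member containing $\infty$, and both directions of the Hausdorff equivalence are all handled properly, and your closing remark correctly identifies why closedness of the compact neighborhood (hence Hausdorffness of $X$) is the essential point. Note that the paper states this classical result without proof, deferring to its reference, so there is no in-paper argument to compare against; yours is the standard one and would serve as a faithful expansion.
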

There is another way of extending a topological space with one point. We follow \cite{Mfinitetopological} here. Let $X$ be a topological space and $*$ any point not in $X$. The \emph{non-Hausdorff cone} is the space $X\cup\{*\}$ with proper open sets the open sets of $X$.

\begin{obs}
	In general, for every topological space $X$, the topology of the Alexandroff extension is finer that the one in the non-Hausdorff cone.
\end{obs}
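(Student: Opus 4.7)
The plan is to compare the two topologies directly on the common underlying set $X\cup\{p\}$, where $p$ denotes the extra point (called $\infty$ in the Alexandroff extension and $*$ in the non-Hausdorff cone). To establish that the Alexandroff extension topology is finer, I would simply verify that every open set of the non-Hausdorff cone is already open in $X^*$.

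First I would list the open sets of the non-Hausdorff cone: by definition these are $\emptyset$, the open sets of $X$, and the whole space $X\cup\{*\}$; note that the only open set containing the extra point is the whole space itself, since a topology must include it. Next I would check each of these is open in $X^*$. The open sets of $X$ are open in $X^*$ by the first clause of the definition of the Alexandroff extension. The whole space $X^*$ has complement $\emptyset$, which is vacuously a closed compact subset of $X$, hence $X^*$ is open by the second clause. The empty set is of course open. This exhausts the open sets of the non-Hausdorff cone, so the containment of topologies holds.

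To make the word \emph{finer} substantive, I would also point out when the inclusion is strict: whenever $X$ possesses a nonempty proper closed compact subset $K$, the set $(X\setminus K)\cup\{\infty\}$ is open in $X^*$ (its complement $K$ is closed and compact in $X$) but is not open in the non-Hausdorff cone, since it is a proper subset of the whole space that nonetheless contains the extra point. In particular, this is the case whenever $X$ is itself compact and has any proper closed subset, or whenever $X$ is non-compact but has any nontrivial compact closed subset.

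There is no real obstacle here: the statement is a direct comparison of two explicitly described bases, and the only subtlety is to remember that the extra point sits inside many open sets of $X^*$ (one for each closed compact $K\subset X$) but only inside the whole space in the non-Hausdorff cone.
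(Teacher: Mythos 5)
Your verification is correct: the paper states this remark without proof, and your direct comparison is exactly the intended argument --- the open sets of $X$ are open in $X^*$ by the first clause of the definition, and the whole space is open in $X^*$ because its complement $\emptyset$ is (vacuously) closed and compact in $X$, which exhausts the topology of the non-Hausdorff cone. Your addendum on strictness is a harmless bonus, not needed since ``finer'' is non-strict; just note that in the ``in particular'' clause the proper closed subset must also be \emph{nonempty} for the witness $(X\setminus K)\cup\{\infty\}$ to be a proper subset of $X^*$.
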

In order to find the Alexandroff extension of our space, we need the following lemma.
\begin{lem}
	If $X$ is an infinite set, there are no closed and compact subsets of $\hiperf$.
\end{lem}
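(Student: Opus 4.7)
The plan is to show that any non-empty closed subset $K \subseteq \hiperf$ must fail to be compact whenever $X$ is infinite. The key structural fact I would exploit is that, in an Alexandroff $T_0$ space, closed sets are exactly the upward-closed sets in the specialization order. By the earlier remark, this order on $\hiperf$ is $C \leqslant D \Leftrightarrow C \subseteq D$, so a closed set $K$ has the property that $C \in K$ and $C \subseteq D$ (with $D$ finite non-empty) imply $D \in K$.

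With that in hand, I would pick any $C_0 \in K$ and use infinitude of $X$ to produce infinitely many incomparable elements of $K$ whose minimal neighborhoods cannot be covered by finitely many others. Concretely, for each $x \in X \setminus C_0$ (an infinite set since $C_0$ is finite and $X$ is infinite), the set $C_0 \cup \{x\}$ belongs to $K$ by upward closure.

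Next, I would test compactness against the canonical open cover of $K$ by minimal neighborhoods, namely $\{2^D \cap \hiperf : D \in K\}$. A finite subcover would consist of $2^{D_1}, \ldots, 2^{D_n}$ with $D_1,\ldots,D_n \in K$, and every element of $K$ would have to be a subset of some $D_i$. In particular, each of the infinitely many sets $C_0 \cup \{x\}$ would satisfy $C_0 \cup \{x\} \subseteq D_i$ for some $i \in \{1,\ldots,n\}$. But each $D_i$ is finite, so only finitely many distinct $x \in X \setminus C_0$ can satisfy $x \in D_i$ for a given $i$, and hence only finitely many can be covered in total. This contradicts the infinitude of $X \setminus C_0$.

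I do not expect a real obstacle here: everything reduces to the Alexandroff/poset correspondence together with the pigeonhole argument on finite subsets of an infinite set. The only mildly delicate point is the implicit convention that the empty subset is excluded (it is trivially closed and compact); this should be made explicit in the statement or absorbed by assuming $K \neq \emptyset$ at the outset of the proof.
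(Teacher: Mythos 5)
Your proof is correct and follows essentially the same route as the paper's: both arguments reduce to the observation that a closed set must contain all (finite) supersets of any of its elements, and then defeat compactness by covering with the minimal neighborhoods $2^D$ and applying pigeonhole to finitely many finite sets. The only cosmetic difference is that the paper first passes to the compact closed subset $\overline{\{a\}}$ and covers that, whereas you cover $K$ directly using its upward closure; your remark about excluding the empty subset matches the paper's implicit restriction to non-empty $B$.
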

\begin{proof}
	Consider a non-empty subset $B\subset\hiperf$ and suppose it is closed and compact. Consider a point $a\in B$, then $\overline{\{a\}}\subset\overline{B}=B$, being a closed subset of a compact space, is compact. But this is not possible: Consider the open covering $\bigcup_{\{a\}\subset C}2^C$ of $\overline{\{a\}}$, and suppose there is a finite subcovering, say $\conjunto{2^{C_1},\ldots,2^{C_s}}$. Then, for every $D\in C\backslash X$, we have $\{a\}\cup D\in\overline{\{a\}}$ but $\{a\}\cup D\notin\conjunto{2^{C_1},\ldots,2^{C_s}}$, so there are no possible finite subcoverings.
\end{proof}
\begin{obs}
	It turns out that given any set $X$, the Alexandroff extension and the non-Hausdorff cone of the hyperspace $2^X_f\subset 2^X_u$ are exactly the same topological space.
\end{obs}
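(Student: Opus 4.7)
The plan is to verify directly that the two topologies on the underlying set $\hiperf\cup\{\infty\}$ coincide by enumerating their open sets. In both the non-Hausdorff cone and the Alexandroff extension, every open set of $\hiperf$ is declared open, so the two topologies automatically agree on the open sets that do not contain the added point. What remains is to compare the open neighborhoods of the new point.

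First I would write down, in each construction, precisely which sets containing the added point are open. In the non-Hausdorff cone, by definition, the only open set containing $*$ is the whole space $\hiperf\cup\{*\}$. In the Alexandroff extension, an open neighborhood of $\infty$ is necessarily of the form $(\hiperf\setminus K)\cup\{\infty\}$ for some subset $K\subset\hiperf$ that is closed and compact in $\hiperf$. So the issue reduces to classifying the closed compact subsets of $\hiperf$.

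The key step is invoking the preceding lemma, which guarantees that when $X$ is infinite there are no non-empty closed compact subsets of $\hiperf$. Consequently the empty set is the only admissible $K$, making $\hiperf\cup\{\infty\}$ the unique open neighborhood of $\infty$ in the Alexandroff extension. Matching this against the description of the non-Hausdorff cone, the two families of open sets coincide term by term, and the identity map on $\hiperf\cup\{\infty\}$ is a homeomorphism between the two structures.

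I do not anticipate any substantive obstacle: once the lemma is in hand, the argument is essentially bookkeeping. The one subtlety worth flagging is the implicit infinitude of $X$---the lemma requires $X$ infinite, and indeed when $X$ is finite $\hiperf$ is itself compact, so every closed subset is closed and compact, producing strictly more open neighborhoods of $\infty$ in the Alexandroff extension than in the non-Hausdorff cone. The intended content of the remark is thus the infinite case, which is precisely the setting in which the lemma was proved.
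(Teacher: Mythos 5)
Your argument is correct and takes the same route as the paper: the remark rests entirely on the preceding lemma, and the paper's own proof of Proposition \ref{prop:extension} performs exactly the classification you describe, concluding that the empty set is the only admissible closed compact complement and hence that the whole space is the unique neighborhood of the added point in both constructions. Your caveat about finiteness is well taken --- for finite $X$ the space $\hiperf$ is compact, $\{\infty\}$ becomes open in the Alexandroff extension, and the two topologies genuinely differ --- so the phrase ``any set $X$'' in the remark should be read as ``any infinite set $X$'', the setting in which the lemma applies.
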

We can identify a point in the hyperspace $\hiper$ so the Alexandroff extension of $\hiperf$ is just a subspace of it.
\begin{prop}\label{prop:extension}
	The subspace $\hiperf\cup\{X\}\subset\hiper$ is the Alexandroff extension of $\hiperf$.
\end{prop}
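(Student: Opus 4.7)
The plan is to exhibit the natural bijection $\varphi:(\hiperf)^*\to\hiperf\cup\{X\}$ that sends the adjoined point $\infty$ to $X$ and is the identity on $\hiperf$, and then verify that the two topologies coincide under $\varphi$. The whole task reduces to comparing which sets are open near the distinguished ``point at infinity''; away from it, both spaces already carry the topology of $\hiperf$, so nothing needs checking there.

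First I would describe the open neighbourhoods of $\infty$ in $(\hiperf)^*$. By definition, these are sets of the form $\{\infty\}\cup U$ with $\hiperf\setminus U$ closed and compact in $\hiperf$. The lemma just proved says that, for infinite $X$, no non-empty subset of $\hiperf$ is both closed and compact, so the only admissible $U$ is $\hiperf$ itself. Hence the unique open neighbourhood of $\infty$ is the whole space $(\hiperf)^*$. This identifies the list of open sets of $(\hiperf)^*$ as: all open sets of $\hiperf$, together with the ambient space; exactly the previous remark's observation that the Alexandroff extension and the non-Hausdorff cone of $\hiperf$ agree.

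Next I would compute the subspace topology that $\hiperf\cup\{X\}$ inherits from $\hiper$. The key observation is that the minimal neighbourhood of $X$ in $\hiper$ is $2^X=\hiper$, so after intersecting with the subspace the minimal neighbourhood of $X$ becomes $\hiperf\cup\{X\}$ itself; every subspace-open set containing $X$ must therefore equal the whole subspace. On the other hand, for any $U\subsetneq X$ one has $X\notin B(U)$, so $B(U)\cap(\hiperf\cup\{X\})=B(U)\cap\hiperf$, and these sets generate precisely the original topology of $\hiperf$ (each $C\in\hiperf$ lies in $B(C)\cap\hiperf$ with $C\subsetneq X$ since $X$ is infinite). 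Thus the subspace $\hiperf\cup\{X\}$ has the same list of open sets: open sets of $\hiperf$, plus the whole subspace.

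The last step is to observe that $\varphi$ is a bijection and, by the two descriptions just obtained, carries open sets to open sets in both directions; hence it is a homeomorphism. The only delicate point, and the one where I would concentrate the verification, is the neighbourhood-of-infinity identification, which rests squarely on the preceding lemma (in turn on the fact that every $C\in\hiperf$ has infinitely many finite supersets when $X$ is infinite). No further obstacle is anticipated beyond routine basic-open bookkeeping.
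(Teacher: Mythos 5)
Your proposal is correct and follows essentially the same route as the paper: both arguments compare the open sets on each side directly, using the preceding lemma (no non-empty closed compact subsets of $\hiperf$ for infinite $X$) to show the only neighbourhood of the added point in the Alexandroff extension is the whole space, and the fact that the minimal neighbourhood of $X$ in $\hiper$ is $2^X=\hiper$ to show the only subspace-open set containing $X$ is the whole subspace. Your explicit handling of the bijection and of the basic sets $B(U)\cap\hiperf$ for $U\subsetneq X$ is just a slightly more spelled-out version of the same bookkeeping.
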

\begin{proof}
	We will show that they have exactly the same open sets. As a subspace, $\hiperf\cup\{X\}$ has as open sets the intersections with open sets of $\hiper$. Let $U$ be an open set of $\hiper$, and consider its intersection with $\hiperf\cup\{X\}$.\begin{itemize}
		\item If $X\notin U$, then the intersection is $U\cap\hiperf$.
		\item If $X\in U$, then $U$ has to be $2^{X}$, so the intersection is the whole set $\hiperf\cup\{X\}$.
	\end{itemize}
	On the other hand, as the Alexandroff extension, the open sets are the open sets of $\hiperf$ and the sets $V$, containing $X$, such that $\hiperf\backslash V$ is closed and compact. But, by the previous lemma, the only possibility is the empty set, so there is only one open set more, $\hiperf\cup\{X\}$.
\end{proof}
Now we give a description of the Alexandroff extension in terms of an inverse limit of subspaces of $\hiperf$. The definition of inverse limit of an inverse system (or sequence) can be found in \cite{MSshape,HYtopology}.
\begin{teo}\label{teo:inverseextension}
	Let $X$ be any set. Consider the hyperspace of finite subsets $\hiperf\subset\hiper$ with the upper semifinite topology. The Alexandroff extension of $\hiperf$ is homeomorphic to an inverse limit of an inverse system of finite spaces.
\end{teo}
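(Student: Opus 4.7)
The plan is to exhibit an inverse system of finite spaces indexed by the directed set $(\mathcal{F},\subseteq)$ of finite subsets of $X$, whose limit is the Alexandroff extension $\hiperf\cup\{X\}\subset\hiper$ supplied by Proposition \ref{prop:extension}. For each $F\in\mathcal{F}$ I would take $Y_F=2^F\cup\{X\}\subset\hiper$ with the subspace topology; since $F$ is finite, $Y_F$ has cardinality $2^{|F|}+1$, and by the same computation used in Proposition \ref{prop:extension} its proper non-empty open sets are exactly the families $2^G$ with $G\subseteq F$, so $Y_F$ is the Alexandroff extension of the finite space $2^F$.

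For $F\subseteq F'$ I would define the bonding map
\[p_{F'F}:Y_{F'}\longrightarrow Y_F,\qquad p_{F'F}(C)=\begin{cases} C & \text{if }C\subseteq F,\\ X & \text{if }C\not\subseteq F,\end{cases}\]
in particular $p_{F'F}(X)=X$. A quick check on preimages of the basic opens shows $p_{F'F}^{-1}(2^G)=2^G$ for $G\subseteq F$, which is open in $Y_{F'}$, so $p_{F'F}$ is continuous; the identity $p_{F'F}\circ p_{F''F'}=p_{F''F}$ on nested triples is immediate by cases. Hence $\{Y_F,p_{F'F}\}_{\mathcal{F}}$ is an inverse system of finite spaces.

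Next I would define $\Phi:\hiperf\cup\{X\}\rightarrow\varprojlim Y_F$ by $\Phi(X)=(X)_{F\in\mathcal{F}}$ and, for a finite $C\subset X$, $\Phi(C)_F=C$ if $C\subseteq F$ and $\Phi(C)_F=X$ otherwise. To check bijectivity I would analyse an arbitrary thread $(C_F)\in\varprojlim Y_F$: either $C_F=X$ for every $F$, giving the preimage $X$, or some $C_{F_0}\neq X$, in which case compatibility forces $C_F=C_{F_0}$ for every $F\supseteq F_0$, and the value at a general $F$ is then recovered by inserting the auxiliary index $F''=F\cup F_0$, yielding exactly the recipe that defines $\Phi(C_{F_0})$.

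Finally I would compare bases to get a homeomorphism. A subbasic open set $\pi_F^{-1}(2^G)$ with $G\subseteq F$ pulls back under $\Phi$ to $\{C\in\hiperf:C\subseteq G\}=B(G)\cap(\hiperf\cup\{X\})$, which is open in the Alexandroff extension; conversely the topology on $\hiperf\cup\{X\}$ is generated by the total space together with the sets $B(U)\cap\hiperf=\bigcup_{G\subseteq U\text{ finite}}(B(G)\cap\hiperf)$, and each piece $B(G)\cap\hiperf$ with $G$ finite is the $\Phi$-image of $\pi_G^{-1}(2^G)$. I expect the only mildly delicate point to be the behaviour at $X$: one must check that its unique non-trivial neighbourhood on the extension side matches the fact that $Y_F$ itself is the only open neighbourhood of $X$ in each $Y_F$, so that the thread $(X)_F$ is in no basic open set other than the whole inverse limit. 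Everything else is routine bookkeeping with the bonding maps.
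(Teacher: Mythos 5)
Your proposal is correct and follows essentially the same route as the paper: the same directed index set of finite subsets, bonding maps that collapse anything not contained in $F$ onto a distinguished point, and the same verification that the natural map into the limit is a well-defined continuous open bijection; the only cosmetic difference is that you adjoin an extra cone point $X$ to each $2^F$ and collapse onto it, whereas the paper collapses onto the top element $C$ of $2^C$ itself. One harmless slip: $Y_F=2^F\cup\{X\}$ with the subspace topology is the non-Hausdorff cone of $2^F$ rather than its Alexandroff extension (for a \emph{finite} space every closed subset is compact, so the extension adds an open point), but your argument only uses that $Y_F$ is a finite space, so nothing breaks.
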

\begin{obs}
	The proof of this theorem for the case $\hipernf$ (or, equivalently, when the cardinal of $X$ is countable) is simpler and more intuitive. Even the statement of the theorem we want to prove is then easier, because we just need a sequence (instead of a system) of finite spaces.  We include it in Appendix \ref{app:countable} and we recommend the reader to check this proof in order to understand the general case.
\end{obs}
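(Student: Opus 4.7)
The remark promises a cleaner proof of Theorem \ref{teo:inverseextension} in the special case $X=\mathbb{N}$ (the countable case), where a general directed inverse system can be replaced by an inverse sequence indexed by $\mathbb{N}$. The plan is to exhibit this sequence of finite spaces explicitly, specify its bonding maps, and build the homeomorphism with the Alexandroff extension $\hipernf\cup\{\mathbb{N}\}$ given by Proposition \ref{prop:extension}.

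The natural sequence to try is $Y_n=2^{\{1,\ldots,n\}}_u$, each a finite $T_0$ Alexandroff space ordered by inclusion with top $\{1,\ldots,n\}$. I would define the bonding maps $p_{n+1,n}\colon Y_{n+1}\to Y_n$ by
$$p_{n+1,n}(C)=\begin{cases}C&\text{if }n+1\notin C,\\ \{1,\ldots,n\}&\text{if }n+1\in C.\end{cases}$$
A short case analysis verifies that $C\subseteq D$ implies $p_{n+1,n}(C)\subseteq p_{n+1,n}(D)$, so $p_{n+1,n}$ is order-preserving and hence continuous. This yields an inverse sequence of finite spaces, avoiding the cofinal directed set of finite subsets of $X$ that one needs when $X$ is uncountable.

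Next I would analyse threads in $\varprojlim Y_n$. The compatibility relation $p_{n+1,n}(C_{n+1})=C_n$ combined with the definition of $p_{n+1,n}$ forces a dichotomy: either $C_n=\{1,\ldots,n\}$ for every $n$ (the top thread), or there is a first index $n_0$ at which $C_{n_0}\neq\{1,\ldots,n_0\}$, in which case $C_n=C_{n_0}$ for every $n\geq n_0$. This identifies the set of threads bijectively with $\hipernf\cup\{\mathbb{N}\}$, sending the top thread to $\mathbb{N}$ and each eventually stable thread to its stable value; call this bijection $\varphi$.

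The main step is to verify that $\varphi$ is a homeomorphism. For continuity of $\varphi$, I would take a basic open set $B(U)\cap\hipernf$ of the extension (with $U\subseteq\mathbb{N}$) and express its preimage as $\bigcup_n\pi_n^{-1}(W_n)$, where $W_n=\{D\in Y_n : D\neq\{1,\ldots,n\},\ D\subseteq U\cap\{1,\ldots,n\}\}$ is a down-set in $Y_n$ and therefore open. For continuity of $\varphi^{-1}$ it suffices to check that each composition $\pi_n\circ\varphi^{-1}$ is order-preserving from $\hipernf\cup\{\mathbb{N}\}$ to $Y_n$, which is a short case check in the three regimes $C\subseteq\{1,\ldots,n\}$, $C\not\subseteq\{1,\ldots,n\}$, and $C=\mathbb{N}$. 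The main obstacle is precisely the bookkeeping of these open sets---confirming that eventually constant threads exhaust $\varphi^{-1}(B(U))$ and that no exotic non-stabilizing threads slip in---and it is exactly this bookkeeping that becomes substantially more involved in the uncountable case, where a single sequence no longer suffices and one has to index over a cofinal directed system of finite subsets.
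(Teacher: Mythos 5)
Your proposal is correct and follows essentially the same route as the paper's Appendix A: the same inverse sequence $2^{\{1,\ldots,n\}}_u$ with the same ``collapse'' bonding maps, and the same classification of threads (the top thread corresponding to $\mathbb{N}$ and the eventually constant ones corresponding to points of $\hipernf$). The only difference is organizational: where the paper checks continuity and openness of the explicit map $h$ directly against basic open sets of the product, you invoke the universal property of the inverse limit together with order-preservation of the coordinate maps, which is a slightly cleaner packaging of the same computations.
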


We proceed with the general case.
\begin{proof}[Proof of Theorem \ref{teo:inverseextension}]
	We define first the inverse system. Consider the directed set $\hiperf$ in which $C\leqslant C'$ if $C\subset C'$. As objects, we consider the finite spaces $2^C$, for every $C\in \hiperf$. For every pair $C\leqslant C'$, define the map $p_{C,C'}:2^{C'}\rightarrow 2^C$ as 
	\begin{equation*}
		p_{C,C'}(D) = \left\{
		\begin{array}{lr}
			D & \text{if }D\subset C\\
			C & \text{if } D\not\subset C
		\end{array}\right.
	\end{equation*}
	for every $D\subset C'$. This map is continuous, because, for every pair $D\subset D'$ such that $D,D'\subset C'$, we have three possibilities. First, if $D\subset D'\subset C$, then $h(D)=D\subset D'=h(D')$. Second, if $D\subset C$ but $D'\not\subset C$, then $h(D)=D\subset C=h(D')$. Finally, if $D,D'\not\subset C$, then $h(D)=C=h(D')$.
	Let us write $\mathcal{X}$ for the inverse limit of the inverse system $\conjunto{2^C,p_{C,C'},\hiperf}$. We define a map
	$$h: \hiperf\cup\{X\}\longrightarrow\mathcal{X}$$ as follows. For $X\in2^X\cup\{X\}$, $\left(h(X)\right)_C=C$, for every $C\in\hiperf$. For every $D\in2^X\cup\{X\}$,
	\begin{equation*}
		\left(h(D)\right)_C = \left\{
		\begin{array}{lr}
			D & \text{if }D\subset C\\
			C & \text{if } D\not\subset C
		\end{array}\right.
	\end{equation*}
	We will show that $h$ is a homeomorphism and, in order to do that, we need to show several things.
	\begin{enumerate}
		\item $h$ is a well defined map. This is almost trivial in the case of the image of $X$, because it is $\comp{h(X)}{C}=C$ , for every $C\in \hiperf$, and $p_{C,C'}(C')=C$ for every $C\leqslant C'$. So $h(X)\in\mathcal{X}$. For  every $D\in\hiperf$, we have that, for every pair $C\leqslant C'$,
		\begin{equation*}
			p_{C,C'}\left(\left(h(D)\right)_{C'}\right) = \left\{
			\begin{array}{lr}
				p_{C,C'}(D) & \text{if }D\subset C'\\
				p_{C,C'}(C') & \text{if } D\not\subset C'
			\end{array}\right.=\left\{
			\begin{array}{lr}
				D & \text{if }D\subset C\\
				C & \text{if } D\not\subset C\\
				C & \text{if } D\not\subset C'
			\end{array}\right.=\comp{h(D)}{C},
		\end{equation*}
		hence $h(D)\in\mathcal{X}$.
		\item $h$ is continuous. The only possible open neighborhood of $h(X)$ is $\mathcal{X}$. For every $D\in\hiperf$, let us consider an open neighborhood $h(D)\subset W$ in $\mathcal{X}$. There exists an open set $V\subset\prod_{C\in2^X_F}2^C$ such that $h(D)\in V\cap\mathcal{X}\subset W$ with
		\begin{equation*}
			\comp{V}{C} = \left\{
			\begin{array}{ll}
				2^D & \text{if } C\in\conjunto{C_1,\ldots,C_n}\\
				2^C & \text{if not,}
			\end{array}\right.
		\end{equation*} 
		for some $\todon$ and with $D\subset C_1,\ldots,C_n$. We claim that $h(2^D)\subset V\cap\mathcal{X}$: For every $A\in 2^D$, we have that, if $C\in\{C_1,\ldots,C_n\}$, then $A\subset D\subset C$, so $\comp{h(A)}{C}=A\in 2^D$. If not, $\comp{h(A)}{C}$ could be $A$ or $C$, but both are in $2^C$.
		\item $h$ is surjective. Let $A$ be an element of $\mathcal{C}$. If, for every $C\in\hiperf$, $\comp{A}{C}=C$, then $h(X)=A$. If, on the contrary, there exists $C\in\hiperf$ such that $\comp{A}{C}=D\varsubsetneq C$, then we compute the rest of the projections as follows.
		\begin{itemize}
			\item For every $C'\leqslant C$, 
			\begin{equation*}
				p_{C',C}(D)= \left\{
				\begin{array}{ll}
					D & \text{if } D\subset C'\\
					C' & \text{if } D\not\subset C'.
				\end{array}\right.
			\end{equation*}
			\item For every $C\leqslant C'$, $p_{C,C'}(D)=D$.
			\item If $C$ and $C'$ are not related ($C\nleqslant C'$ and $C'\nleqslant C$), then we know that there exists $C''\geqslant C,C'$ (because $2^X_F$ with the subset relation is a directed set), and then, $p_{C,C''}(D)=D$ so 
			\begin{equation*}
				p_{C',C''}(D)= \left\{
				\begin{array}{ll}
					D & \text{if } D\subset C'\\
					C' & \text{if } D\not\subset C',
				\end{array}\right.
			\end{equation*}
			and hence 
			\begin{equation*}
				\comp{A}{C}= \left\{
				\begin{array}{ll}
					D & \text{if } D\subset C\\
					C & \text{if } D\not\subset C,
				\end{array}\right.
			\end{equation*}
			so $A=h(D)$.
		\end{itemize}
		\item $h$ is injective.
		Let $C\neq D$ two points of $\hiperf\cup\{X\}$. If one of the two points is $X$, it is clear that the images are different. If both are points of $\hiperf$, then consider $E\geqslant C,D$ and then we have $C=\comp{h(C)}{E}\neq\comp{h(D)}{E}=D$.
		\item $h$ is an open map.
		Let $D\in\hiperf$ a point and $2^D$ its minimal open neighborhood. We claim that $h(2^D)=V\cap\mathcal{X}$, with
		\begin{equation*}
			\comp{V}{C}= \left\{
			\begin{array}{ll}
				2^D & \text{if } C=D\\
				C & \text{if not}.
			\end{array}\right.
		\end{equation*}
		\begin{itemize}
			\item [$\subset$] For every $E\subset D$, we have $\comp{h(E)}{D}=E\in 2^D$.
			\item [$\supset$] Let $A$ be a point of the intersection $V\mathcal{X}$. Then $\comp{A}{D}=B\subset D$. Because of the surjectivity we have $A=h(B)$, with $B\in 2^D$.
		\end{itemize}
		Hence $h$ is open since $V\cap\mathcal{X}$ is open.
	\end{enumerate}
We conclude that $h$, being a well defined, continuous, bijective and open map, is a homeomorphism between the Alexandroff extension of $\hiperf$ and the inverse limit $\mathcal{X}$.
\end{proof}

\section{Embeddings into Alexandroff hyperspaces}\label{sec:embeddings}
Now we define some embeddings into our hyperspaces that allow us to find universal spaces for Alexandroff and finite $T_0$ topological spaces.

Let us recall a result, namely Proposition 2.3 in \cite{MGupper}, about embeddings of a space into its hyperspace with the upper semifinite topology:
\begin{prop}Let $X$ be a Tychonov space. The map $\phi:X\rightarrow 2^X_u$ given by $\phi(x)=\{x\}$ is a topological embedding. Moreover $\phi(X)$ (called the canonical copy of $X$) is dense in $2^X_u$.
\end{prop}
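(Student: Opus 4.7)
The plan is to verify separately the three properties packaged into ``topological embedding'' (injective, continuous, homeomorphism onto image) and then establish density, in both cases exploiting the very explicit base $\{B(U):U\textrm{ open in }X\}$ of the upper semifinite topology. The Tychonov hypothesis is used mildly: it implies $T_1$, which guarantees that every singleton $\{x\}$ is closed, so that $\phi(x)=\{x\}$ does land in $2^X$. Injectivity of $\phi$ is then immediate from $\{x\}=\{y\}\Leftrightarrow x=y$.

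For continuity I would compute, for any open $U\subset X$,
$$\phi^{-1}\parentesis{B(U)}=\conjunto{x\in X:\{x\}\subset U}=U,$$
which is open. Since the $B(U)$ form a base for $2^X_u$, $\phi$ is continuous. For the ``homeomorphism onto image'' part I would show that $\phi$ is open as a map $X\to\phi(X)$: for any open $U\subset X$ I expect the equality $\phi(U)=B(U)\cap\phi(X)$, the inclusion $\subset$ being clear and the reverse following from the fact that a singleton $\{x\}$ lies in $B(U)$ exactly when $x\in U$. This identifies $\phi(U)$ with an open set in the subspace topology, so $\phi$ is an embedding.

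For density I would take an arbitrary non-empty open set $W\subset 2^X_u$. It contains some non-empty basic open $B(U)$ with $U$ open in $X$; non-emptiness of $B(U)$ forces $U$ to contain at least one non-empty closed set, and using $T_1$ I can just pick any $x\in U$ to obtain $\{x\}\in\phi(X)\cap B(U)\subset\phi(X)\cap W$. Hence $\phi(X)$ meets every non-empty open set, i.e.\ it is dense.

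The routine character of all these verifications means there is no real obstacle; the only subtle point is that the whole argument relies on singletons being closed, which is exactly what Tychonov gives us. One could note in passing that $T_1$ alone suffices, and that the Tychonov assumption becomes genuinely necessary only in the richer results of \cite{MGupper} where separation properties of $X$ are transferred to structural properties of $2^X_u$.
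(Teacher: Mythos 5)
Your proof is correct and complete. Note that the paper itself offers no proof of this statement---it is recalled without proof from Proposition 2.3 of \cite{MGupper}---so there is no argument to compare against; your verification via the identities $\phi^{-1}(B(U))=U$ and $\phi(U)=B(U)\cap\phi(X)$, together with picking a point of some nonempty closed $C\subset U$ to get density, is the standard one, and your closing observation that $T_1$ alone suffices (closedness of singletons being the only separation used) is accurate.
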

Note that the hyperspaces used in this proposition have the upper semifinite topology given by the topology of $X$ (in contrast to our case, in which they have the discrete topology). This is used to stablish the quoted embedding. We would like to embed topological spaces in the hyperspaces $\hiper$ and $\hiperf$. It is obvious that the same map is not useful here. In fact, we have the following anti-embeddability result:
\begin{prop}
	Let $X$ be a topological space. Then, the map $\phi:X\rightarrow\hiperf$, defined by $\phi(x)=\{x\}$, is continuous if and only if $X$  has the discrete topology.
\end{prop}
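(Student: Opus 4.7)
The statement is a biconditional, so the proof splits naturally into two directions, one of which is trivial and the other of which rests on a single observation about the minimal neighborhoods of singletons in $\hiperf$.

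For the easy direction ($\Leftarrow$), if $X$ carries the discrete topology, then every function from $X$ to any topological space is continuous, so there is nothing to prove.

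For the forward direction ($\Rightarrow$), I would argue that $\phi$ continuous forces every singleton of $X$ to be open. The key point is that, because $X_d$ is discrete, every subset of $X$ is open in $X_d$; in particular, for each $x\in X$ the set $\{x\}$ is open in $X_d$, so $B(\{x\})\cap\hiperf=2^{\{x\}}\cap\hiperf=\{\{x\}\}$ is a basic open set of $\hiperf$. This matches the description of the minimal neighborhood of the point $\{x\}\in\hiperf$ given earlier in the paper. Taking preimages under $\phi$,
\[
\phi^{-1}\bigl(\{\{x\}\}\bigr)=\{y\in X:\{y\}=\{x\}\}=\{x\},
\]
so continuity of $\phi$ forces $\{x\}$ to be open in $X$. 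Since $x$ was arbitrary, $X$ is discrete.

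There is no serious obstacle here; the proof is essentially a one-line preimage computation. The only subtle point worth stressing explicitly is that the discreteness being detected is the discreteness of $X$, while the discreteness that makes $\{\{x\}\}$ open in $\hiperf$ is the a priori discreteness of $X_d$ used to form the hyperspace. In other words, the reason $\{\{x\}\}$ is open is structural to the construction $\hiperf$, and continuity then transfers this openness back to $X$ itself.
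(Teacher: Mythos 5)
Your proof is correct and follows essentially the same route as the paper: both arguments hinge on the observation that $2^{\{x\}}=\{\{x\}\}$ is an open neighborhood of $\phi(x)$ in $\hiperf$, so continuity forces $\{x\}$ to be open in $X$ (the paper phrases this via a neighborhood $U$ with $\phi(U)\subset 2^{\{x\}}$ rather than via the preimage, which is the same computation). Your explicit treatment of the trivial converse, which the paper omits, is a harmless addition.
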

\begin{proof}
	If it is continuous then, for every $x\in X$, there exists a neighborhood of $x$, say $U$, such that $\phi(U)\subset2^{\{x\}}=\{x\}$. If $y$ is another point of $X$ lying in $U$, then its image would be $x$, but this is not possible. Then, $U=\{x\}$ is open in $X$ so it is discrete.
\end{proof}
We will need to find a new class of spaces and a different map. Every subspace of $\hiper$ is a $T_0$ Alexandroff space, so the following proposition is natural.
\begin{prop}\label{prop:alexembeddings}
	For every $T_0$ Alexandroff space $X$, there exists a topological embedding $\rho:X\rightarrow\hiper$. If the space $X$ is also locally finite, then the embedding is into $\hiperf$. Moreover, the embedding is as an open subset if and only if $X$ has the discrete topology.
\end{prop}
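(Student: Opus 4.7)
The natural candidate is the map
\[
\rho: X \longrightarrow 2^{X_d}_u,\qquad \rho(x)=B_x,
\]
where $B_x$ denotes the minimal neighborhood of $x$ in the Alexandroff space $X$. Since $X_d$ is discrete, every subset of $X$ is closed in $X_d$, so $\rho$ takes values in $2^{X_d}_u$, and in $2^{X_d}_f$ as soon as each $B_x$ is finite, i.e.\ whenever $X$ is locally finite (by Proposition \ref{teo:localesalexandroff}). The injectivity of $\rho$ follows immediately from the $T_0$ hypothesis: in a $T_0$ Alexandroff space the assignment $x\mapsto B_x$ is injective because $B_x=B_y$ forces $x$ and $y$ to have the same neighborhood filter.

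For continuity and openness onto the image, the key computation is the description of the preimage of a minimal neighborhood of $B_x$ in $2^{X_d}_u$. By the remark following the proposition that $2^{X_d}_u$ is Alexandroff, the minimal neighborhood of the point $B_x\in 2^{X_d}_u$ is $2^{B_x}$. Using that, for $y\in X$, $B_y\subseteq B_x$ is equivalent to $y\in B_x$ (one direction is obvious, the other is minimality of $B_y$), I would compute
\[
\rho^{-1}(2^{B_x})=\{y\in X:B_y\subseteq B_x\}=B_x.
\]
Since $B_x$ is open in $X$, this proves continuity (checking on minimal neighborhoods suffices in an Alexandroff space). The same identity rewritten gives $\rho(B_x)=\{B_y:y\in B_x\}=\rho(X)\cap 2^{B_x}$, so the image of each minimal neighborhood of $X$ is open in the subspace $\rho(X)$, which shows that $\rho:X\to\rho(X)$ is a homeomorphism. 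Combined with the preceding observation, this handles the embedding into $2^{X_d}_u$, and also into $2^{X_d}_f$ under local finiteness.

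It remains to characterize when $\rho(X)$ is open in $2^{X_d}_u$. If $X$ is discrete, then $B_x=\{x\}$ for every $x$, so the minimal neighborhood $2^{B_x}=\{\{x\}\}$ of $\rho(x)$ is contained in $\rho(X)$, and hence $\rho(X)$ is a union of minimal open sets, therefore open. Conversely, if $\rho(X)$ is open, then for every $x\in X$ we have $2^{B_x}\subseteq\rho(X)$, so in particular the singleton $\{x\}\in 2^{B_x}$ must be of the form $B_y$ for some $y\in X$; since $y\in B_y=\{x\}$ forces $y=x$, this yields $B_x=\{x\}$, making $X$ discrete.

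The only mildly delicate point is the equivalence $B_y\subseteq B_x\Leftrightarrow y\in B_x$, which rests on the interplay of minimality of neighborhoods with the $T_0$ axiom; once it is in place, continuity, openness onto the image, and the final characterization all reduce to clean set-theoretic manipulations, so I do not foresee any serious obstacle.
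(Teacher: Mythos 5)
Your proposal is correct and uses the same map $\rho(x)=B_x$ as the paper, with the continuity/openness-onto-image argument being just a more explicit unpacking of the paper's order-theoretic relation $x\leqslant y\Leftrightarrow B_x\subset B_y$ via the identity $\rho^{-1}(2^{B_x})=B_x$. The one substantive difference is in your favor: you actually prove the final clause (the embedding is open if and only if $X$ is discrete), which the paper's own proof never addresses; your argument for it --- $2^{B_x}\subseteq\rho(X)$ forces $\{x\}=B_y$ for some $y$, hence $B_x=\{x\}$ --- is correct, and your only inessential slip is attributing the equivalence $B_y\subseteq B_x\Leftrightarrow y\in B_x$ partly to the $T_0$ axiom, when it follows from minimality of neighborhoods alone.
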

\begin{proof}
	We define the map
	\mapeo{\rho}{X}{\hiper}{x}{B_x.}
	It is obviously well defined. If $X$ is locally finite, we know by Proposition \ref{teo:localesalexandroff} that the minimal neighborhoods are finite so then, actually, the image is in $\hiperf$. It is a continuous map, because 
	\begin{equation}
		\label{relbases}
		x\leqslant y\Longleftrightarrow B_x\subset B_y\Longleftrightarrow \rho(x)\leqslant\rho(y).
	\end{equation}
	It is injective, because, for two different points $x\neq y$ in $X$, since $X$ is $T_0$, there exists an open neighborhood of one not containing the other, let us say $x\in U\not\ni y$. Then, $x\in B_x\not\ni y$ so $B_x\neq B_y$ and hence $\rho(x)\neq\rho(y)$. It remains to show that the map restricted to its image $\rho:X\rightarrow\rho(X)$ is a homeomorphism. But this is trivial because of relation (\ref{relbases}).
\end{proof}
As we know, the hyperspaces $\hiper$ and $\hiperf$ are determined by the cardinal of the space $X$. So, we can generalize this embeddings a little bit, obtaining universal spaces for alexandroff $T_0$ spaces. A topological space $\mathcal{U}$ is called \emph{universal} for a class $\mathcal{P}$ when it is possible to embed every topological space $X$ of the class $\mathcal{P}$ into it as a topological copy, that is, there exist a continuous map $\tau:X\rightarrow\mathcal{U}$ such that $\tau(X)$ is homeomorphic to $X$.
\begin{prop}
	Let $X$ be any set. The hyperspace $\hiper$ is universal for every $T_0$ Alexandroff space $Y$ with $\text{card}(Y)\leqslant\text{card}(X)$. If $Y$ is locally finite, then the embedding is into $\hiperf$.
\end{prop}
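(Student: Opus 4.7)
The plan is to reduce to the previous Proposition \ref{prop:alexembeddings}, which already gives an embedding $\rho: Y \to 2^{Y_d}_u$ for any $T_0$ Alexandroff space $Y$. What remains is to embed $2^{Y_d}_u$ into $\hiper$ using only the cardinality hypothesis, and then compose.

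First, since $\mathrm{card}(Y)\leqslant\mathrm{card}(X)$, I would fix an injection $\iota: Y \hookrightarrow X$ and consider its elevation $\hat{\iota}: 2^{Y_d}_u \to 2^{X_d}_u$ defined by $\hat{\iota}(C) = \iota(C)$. Well-definedness is immediate: the image of a nonempty set under an injection is nonempty. I would then check the four standard requirements for an embedding. Continuity follows from the order characterization of continuous maps between $T_0$ Alexandroff spaces, since $C \subseteq C'$ in $2^{Y_d}_u$ plainly implies $\iota(C) \subseteq \iota(C')$ in $2^{X_d}_u$. Injectivity of $\hat{\iota}$ is immediate from injectivity of $\iota$. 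For the map to be open onto its image, I would compute the image of the minimal neighborhood and verify the identity
\[
\hat{\iota}(2^C) \;=\; 2^{\iota(C)} \cap \hat{\iota}\!\left(2^{Y_d}_u\right),
\]
the nontrivial inclusion $\supseteq$ using that every subset $E \subseteq \iota(C)$ in the image of $\hat{\iota}$ is of the form $\iota(D)$ for $D = \iota^{-1}(E) \subseteq C$, which relies crucially on the injectivity of $\iota$.

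Composing, the map $\hat{\iota}\circ\rho : Y \to \hiper$ is then the desired topological embedding, proving the first assertion. For the locally finite case, Proposition \ref{prop:alexembeddings} guarantees $\rho(Y)\subset 2^{Y_d}_f$, and the elevation $\hat{\iota}$ manifestly preserves the cardinality of each element (again by injectivity of $\iota$), so $\hat{\iota}\circ\rho$ takes values in $\hiperf$, as required.

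There is no real obstacle here: the argument is essentially formal once one has Proposition \ref{prop:alexembeddings} and the observation that an injection of index sets elevates to an embedding of upper semifinite hyperspaces. The only point that deserves care is the openness verification for $\hat{\iota}$, where injectivity of $\iota$ is used both to identify preimages of elements of $2^{\iota(C)}$ and to ensure the correspondence $D \leftrightarrow \iota(D)$ is bijective on subsets of $C$; without injectivity, neither openness nor preservation of cardinality would hold.
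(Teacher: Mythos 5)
Your proof is correct and amounts to essentially the same argument as the paper's: the paper fixes a bijection $\alpha:Y\to Z\subset X$ and directly defines $y\mapsto\{\alpha(y_i):y_i\in B_y\}$, which is exactly your composite $\hat{\iota}\circ\rho$, verifying the embedding properties via the order relation as in Proposition \ref{prop:alexembeddings}. Your factorization through the elevation $\hat{\iota}:2^{Y_d}_u\to\hiper$ is just a more modular write-up of the same map, with the openness and cardinality-preservation checks made explicit.
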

\begin{proof}
	Consider a bijection of $Y$ with a subset of $X$, $\alpha:Y\rightarrow Z\subset X$. We define the map 
	\mapeo{\rho}{Y}{\hiper}{y}{\conjunto{\alpha(y_i): y_i\in B_y}.}
	This map is shown to be well defined, continuous, injective and a homeomorphism if considered onto its image in the same way as in the previous proposition, because relation (\ref{relbases}) holds again.
\end{proof}
\begin{ej}
	We really need the $T_0$ condition. For example, the finite space $X=\{1,2,3\}$, with open sets $\tau=\{\{1\},\{1,2\},\{1,2,3\}\}$, is not $T_0$ and the map $\rho:X\rightarrow\hipernf$ sending $x$ to $B_x$ is not injective.
\end{ej}
\begin{obs}
	What we are really doing is to consider every \textsc{poset} as a family of subsets and inclusions, which is quite natural.
\end{obs}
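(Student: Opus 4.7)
The plan is to reduce the general case to Proposition \ref{prop:alexembeddings} by transporting $Y$ into $X$ along an injection. Since $\text{card}(Y)\leqslant\text{card}(X)$, fix an injection $\alpha:Y\hookrightarrow X$ with image $Z=\alpha(Y)\subset X$, and define
$$\rho:Y\longrightarrow\hiper,\qquad \rho(y)=\alpha(B_y)=\{\alpha(y'):y'\in B_y\}.$$
This is well defined because $B_y$ is a non-empty subset of $Y$, so $\alpha(B_y)$ is a non-empty subset of $X$. If $Y$ is moreover locally finite, Proposition \ref{teo:localesalexandroff} gives that each $B_y$ is finite, so $\rho(y)\in\hiperf$ and the codomain can be restricted.

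The key step is to check the two-way implication
$$y\leqslant y'\;\Longleftrightarrow\; B_y\subset B_{y'}\;\Longleftrightarrow\; \alpha(B_y)\subset\alpha(B_{y'})\;\Longleftrightarrow\;\rho(y)\leqslant\rho(y'),$$
where the middle equivalence uses injectivity of $\alpha$. This chain of equivalences supplies everything at once: continuity of $\rho$ (since a map between $T_0$ Alexandroff spaces is continuous iff it is order-preserving), the fact that $\rho$ is a homeomorphism onto its image (the induced order on $\rho(Y)$ as a subspace of $\hiper$ coincides with the image of the order on $Y$), and injectivity via the $T_0$ hypothesis: if $y\neq y'$ then some open set separates them, which forces $B_y\neq B_{y'}$, and then $\alpha(B_y)\neq\alpha(B_{y'})$ by injectivity of $\alpha$.

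I expect no real obstacle: the argument is essentially a reparametrization of the proof of Proposition \ref{prop:alexembeddings} after identifying $Y$ with $Z=\alpha(Y)\subset X$. The only subtlety worth highlighting is conceptual rather than technical: one is not equipping $Z$ with the subspace topology coming from $Y$, but rather using the discrete topology of $X_d$ as a blank slate on which the Alexandroff order of $Y$ gets faithfully recorded by the subset lattice via the assignment $y\mapsto\alpha(B_y)$.
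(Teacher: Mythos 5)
Your argument is correct and is essentially identical to the paper's own proof of the universality proposition that this remark glosses: the paper likewise fixes a bijection $\alpha$ of $Y$ onto a subset $Z\subset X$, sends $y$ to $\{\alpha(y_i):y_i\in B_y\}$, and invokes the order-preserving equivalence (relation~(\ref{relbases})) to get continuity, injectivity from the $T_0$ hypothesis, and a homeomorphism onto the image. The remark itself carries no separate proof in the paper --- it is an informal restatement of exactly the mechanism you describe, namely that the assignment $y\mapsto\alpha(B_y)$ records the partial order of $Y$ faithfully in the subset lattice of $X$.
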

As an immediate consequence of the last proposition, we have that the power of finite subsets of the natural numbers universal space for $T_0$ finite spaces.
\begin{cor}\label{cor:finiteuniversal}
	The space $\hipernf$ is universal for every $T_0$ finite topological space.
\end{cor}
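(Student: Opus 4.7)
The plan is to deduce this corollary directly from the preceding proposition by taking the ambient set to be $\mathbb{N}$. The preceding proposition guarantees, for any set $X$, a topological embedding of every $T_0$ Alexandroff space $Y$ with $\operatorname{card}(Y)\leqslant\operatorname{card}(X)$ into $\hiper$, and moreover into $\hiperf$ whenever $Y$ is locally finite. So the only work is to verify that both hypotheses apply when $X=\mathbb{N}$ and $Y$ is an arbitrary $T_0$ finite space.

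First I would note that every finite topological space is automatically Alexandroff: an arbitrary intersection of open sets reduces, after removing repetitions, to a finite intersection. Hence a $T_0$ finite space is a $T_0$ Alexandroff space, so the first hypothesis of the previous proposition is met. Second, as observed in the remark after the definition of local finiteness, every finite topological space is locally finite (the whole space itself is a finite open neighborhood of each point), so the stronger conclusion embedding into $\hiperf$ rather than just $\hiper$ is available.

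For the cardinality hypothesis, if $Y$ is a finite topological space then $\operatorname{card}(Y)$ is a natural number, and in particular $\operatorname{card}(Y)\leqslant\aleph_0=\operatorname{card}(\mathbb{N})$. Thus choosing any injection $\alpha:Y\hookrightarrow\mathbb{N}$ and applying the map
\[
\rho:Y\longrightarrow \hipernf,\qquad \rho(y)=\{\alpha(y_i):y_i\in B_y\},
\]
constructed in the proof of the previous proposition yields the desired embedding. The only thing worth checking carefully, which is essentially immediate from the form of $\rho$, is that the image indeed sits inside the finite-subset hyperspace $\hipernf$ rather than the larger $\hipern$; but this is guaranteed by the fact that each $B_y$ is finite (by local finiteness together with Proposition \ref{teo:localesalexandroff}) and $\alpha$ has finite image on finite sets. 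No step should present a genuine obstacle: the corollary is really an immediate specialization of the previous proposition to $X=\mathbb{N}$, with the verification that finite $T_0$ spaces belong to the class handled there.
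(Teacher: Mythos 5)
Your proposal is correct and follows exactly the route the paper intends: the paper states this corollary as an immediate consequence of the preceding proposition and gives no separate proof, and you have simply filled in the routine verifications (finite implies Alexandroff and locally finite, finite cardinality is at most that of $\mathbb{N}$) needed to specialize that proposition to $X=\mathbb{N}$.
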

Recall, from \cite{Sfinite} (see also \cite{Mfinitetopological}) that every finite space is homotopic to a finite $T_0$ space. So we can generalize the universality if we just want to consider the homotopy type of the space.
\begin{cor}\label{cor:finiteuniversal2}
	The space $\hipernf$ is universal up to homotopy type for every finite topological space.
\end{cor}
Using the Alexandroff-McCord functors, we can actually embed every $T_0$ Alexandroff space up to weak homotopy equivalence. Recall the Alexandroff-McCord correspondence from \cite{Msingular}, in which simplicial complexes are related with Alexandroff $T_0$ spaces. On the one hand, for any Alexandroff space space $X$, we define $\mathcal{K}(X)$ as an abstract simplicial complex with vertex set $X$ and simplices the finite totally ordered subsets $x_0\leqslant\ldots\leqslant x_s$ of the poset $X$. On the other hand, given a simplicial complex $K$, we define a Alexandroff $T_0$ space $\mathcal{X}(K)$ with points the simplices of $K$ ordered by inclusion. This correspondence is shown to preserve homotopy and singular homology groups.
\begin{prop}\label{prop:weakly}
	For every $T_0$ Alexandroff space $X$, there is an embedding $\varphi:Y\rightarrow\hiper$, where $Y$ is a topological space weakly homotopically equivalent to $X$ and $\varphi(Y)$ is open in $\hiper$. If $X$ is locally finite, the embedding is into $\hiperf$.
\end{prop}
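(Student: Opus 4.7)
The plan is to apply the Alexandroff--McCord correspondence to replace $X$ by a poset whose elements are finite chains of $X$, which form a family closed under taking non-empty subsets. Set $K=\mathcal{K}(X)$, the abstract simplicial complex with vertex set $X$ whose simplices are the non-empty finite chains in the poset $X$, and let $Y=\mathcal{X}(K)$ be the $T_0$ Alexandroff space whose points are the simplices of $K$, partially ordered by inclusion. Composing the two McCord weak equivalences $X\leftarrow|K|\to\mathcal{X}(K)=Y$, one obtains that $Y$ is weakly homotopy equivalent to $X$.

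I would then define $\varphi:Y\to\hiper$ by $\varphi(\sigma)=\sigma$, viewing each simplex (a finite chain of $X$) as a non-empty finite subset of the discrete set $X_d$. In particular, $\varphi$ always takes values in $\hiperf\subseteq\hiper$. The order on $Y$ is inclusion, as is the induced order on $\hiper$, so $\varphi$ is an order-embedding of posets. By the same reasoning used for the map $\rho$ in Proposition~\ref{prop:alexembeddings}, this makes $\varphi$ a topological embedding of $T_0$ Alexandroff spaces, since in this category continuous maps are exactly the order-preserving ones and minimal neighborhoods are determined by the order.

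To see that $\varphi(Y)$ is open in $\hiper$, it suffices to verify that for every $\sigma\in\varphi(Y)$ the minimal neighborhood $2^\sigma$ of $\sigma$ in $\hiper$ is contained in $\varphi(Y)$. But every non-empty subset of a finite chain in $X$ is itself a finite chain, hence a simplex of $K$ and a point of $\varphi(Y)$; so $2^\sigma\subseteq\varphi(Y)$, as required. The same argument applies verbatim in $\hiperf$ when $X$ is locally finite.

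There is no serious obstacle; the only conceptual point is realising that one must not try to embed $X$ directly via $\rho$, because Proposition~\ref{prop:alexembeddings} already shows that $\rho(X)$ fails to be open as soon as $X$ is not discrete. Replacing $X$ by $\mathcal{X}(\mathcal{K}(X))$ fixes this precisely because the collection of finite chains of a poset is hereditary under the operation of passing to non-empty subsets, which is exactly the closure condition needed for an open subset of $\hiper$.
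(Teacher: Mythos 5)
Your proof is correct and follows essentially the same route as the paper: take $Y=\mathcal{X}(\mathcal{K}(X))$, embed it into the hyperspace by regarding each finite chain as a finite subset of $X_d$, and observe that openness of the image follows because every non-empty subset of a chain is again a chain. Your additional remark that the image always lands in $\hiperf$ (since simplices are finite chains) is a valid minor strengthening of the statement, but does not change the argument.
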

\begin{proof}
	We define $Y=\mathcal{X}(\mathcal{K}(X))$ and the map $\varphi:Y\rightarrow\hiper$ is just the identity map. It is obvious that, every element of $Y$ belongs to $\hiper$. The continuity, bijectivity onto the image and the continuity of the inverse are also trivial. We just need to show that $\varphi(Y)$ is open in $\hiper$. Let $C$ be a point of $\varphi(Y)$. Then $C\in2^C\subset\varphi(Y)$, because, for every $D\in2^C$ we have $D\in\mathcal{X}(\mathcal{K}(X))$.
\end{proof}
As a direct corollary, we obtain an embedding of the Alexandroff space associated to every simplicial complex.
\begin{cor}\label{cor:simplicalneighborhood}
	Let $K$ be a simplicial complex. Then, there exists an embedding of $\mathcal{X}(K)$ as an open subset of $2^V_u$, where $V$ is the discrete set of vertices. The embedded copy contains the set of singletons of vertices $2^V_1$. If the simplicial complex is finite, the embedding is into $2^V_f$.
\end{cor}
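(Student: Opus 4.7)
The plan is to mimic the proof of Proposition \ref{prop:weakly} but with a tailored embedding: since the elements of $\mathcal{X}(K)$ are, by definition, finite non-empty subsets of $V$, the natural candidate for $\varphi$ is the \emph{inclusion map}
\[
\varphi:\mathcal{X}(K)\longrightarrow 2^V_u,\qquad \sigma\longmapsto\sigma,
\]
which is automatically well defined on the level of underlying sets. If $K$ is finite then $V$ is finite so $2^V_u=2^V_f$; more generally, since simplices are finite, the image always sits inside $2^V_f$, and the statement of the corollary for finite $K$ follows for free.

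I would first check that $\varphi$ is a topological embedding by the same order-theoretic argument as in Proposition \ref{prop:alexembeddings}. Both $\mathcal{X}(K)$ and $2^V_u$ are $T_0$ Alexandroff spaces in which the induced partial order is inclusion of finite subsets of $V$; hence for any $\sigma,\tau\in\mathcal{X}(K)$,
\[
\sigma\leqslant\tau\text{ in }\mathcal{X}(K)\iff\sigma\subseteq\tau\iff\varphi(\sigma)\leqslant\varphi(\tau)\text{ in }2^V_u,
\]
which gives continuity, injectivity, and continuity of the inverse in one stroke (the minimal neighborhoods are preserved, so the homeomorphism onto the image is automatic).

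The only genuine content is openness of the image, and this is where the defining property of a simplicial complex enters. Given $\sigma\in\mathcal{X}(K)$, its minimal neighborhood in $2^V_u$ is $2^\sigma$, the family of all non-empty subsets of $\sigma$. But a simplicial complex is closed under taking non-empty subsets (faces), so every element of $2^\sigma$ is again a simplex of $K$, i.e.\ a point of $\varphi(\mathcal{X}(K))$. Thus $2^\sigma\subseteq\varphi(\mathcal{X}(K))$ for every $\sigma$, which is exactly the Alexandroff-theoretic characterization of an open set and shows the image is open. This step — recognising that the closure-under-faces axiom for $K$ is exactly what is needed for openness in the upper semifinite topology — is the one place where the structure of $K$ (beyond being an Alexandroff $T_0$ space) is actually used; it is also the main (mild) obstacle worth flagging.

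Finally, I would note that every $v\in V$ is a vertex of $K$, hence $\{v\}\in\mathcal{X}(K)$ for all $v\in V$, which yields the inclusion $2^V_1\subseteq\varphi(\mathcal{X}(K))$ required by the statement. Since $V$ finite forces $2^V_u=2^V_f$, restricting to finite $K$ gives the embedding into $2^V_f$ and concludes the argument.
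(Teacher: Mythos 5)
Your proof is correct and follows essentially the same route as the paper: the paper obtains this corollary from Proposition \ref{prop:weakly}, whose proof likewise uses the identity/inclusion map $\sigma\mapsto\sigma$, gets the embedding for free from the order isomorphism, and derives openness of the image from the fact that $2^\sigma\subset\mathcal{X}(K)$ because simplicial complexes are closed under taking faces. Your explicit identification of the face-closure axiom as the one substantive ingredient, and the observation that the image always lands in $2^V_f$ since simplices are finite, are accurate refinements of the paper's (terser) argument.
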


\section{The simplicial neighborhoods category}\label{sec:simplicial}
Let $K$ be a simplicial complex with vertex set $V$. As we saw in Corollary \ref{cor:simplicalneighborhood}, we can identify the simplicial complex with an open subspace $U=\mathcal{X}(K)\subset 2^V_f$ of a hyperspace, such that it contains a canonical copy of the vertex set, that is, $2^V_1=\conjunto{\{v_1\},\ldots,\{v_n\}}\subset U$. We will say that $U$ satisfying these conditions is a \emph{simplicial neighborhood} of the vertex set $2^V_1$. 

We will review some properties and examples of simplicial complexes from this point of view. First, we define some characteristics of simplicial neighborhoods adapting the corresponding ones for the simplicial complexes, taking the concepts from \cite{Salgebraic}. Let $U\subset 2^V_f$ be a simplicial neighborhood. We will say that $U$ is \emph{locally finite} if, for every $C\in U\cap 2^V_1$, the closure $\overline{\{C\}}^U$ is finite. Given any simplicial neighborhood $U\subset 2^V_f$, we will say that $C\in U$ is a \emph{$q$-simplex} if $C\in 2^V_{q+1}\backslash2^V_{q}$. We also say that $C$ has \emph{dimension} $q$. For any two elements $C,D\in U$ satisfying $C\leqslant D$, we say that $C$ is a \emph{face} of $D$ and a \emph{proper face} if $C\neq D$. Moreover, if $C$ has dimension $q$, we say that $C$ is a \emph{$q$-face} of $D$.

\begin{ej} We reformulate some properties of the list of properties for simplicial complexes from \cite{Salgebraic}.
	\begin{enumerate}[noitemsep,nolistsep]
		\item The empty set $\emptyset$ is a simplicial neighborhod.
		\item For every set $V$, $2^V_f$ is a simplicial neighborhood.
		\item Let $C$ be a point of a simplicial neighborhood. Its set of faces, $\overline{C}=\conjunto{D\in U:D\leqslant C}$, is a simplicial neighborhood, because $\overline{C}=2^C$.
		\item For $C\in U$, the set of proper faces of $C$, $\dot{C}=\conjunto{D\in U:D\lneqq C}$ is a simplicial neighborhood. This is so, because $2^C\backslash\{C\}=\bigcup_{D\lneqq C} 2^D$ is open.
		\item For every simplicial neighborhood $U\subset 2^V_f$, its $q$-dimensional skeleton $U_q=U\cap 2^V_{q+1}$, being an intersection of open sets, is a simplicial neighborhood.
		\item Let $X$ be any set. Consider a family $\mathcal{W}=\{W_{\alpha}\}$ of subsets $W_{\alpha}\subset X$. The nerve $\mathcal{N}(\mathcal{W})$ of $\mathcal{W}$,  is the simplicial neighborhood of $2^{\mathcal{W}}_f$ defined by
		$$\{W_{\alpha_0},\ldots,W_{\alpha_q}\}\in\mathcal{N}(\mathcal{W})\Longleftrightarrow W_{\alpha_0}\cap\ldots\cap W_{\alpha_q}\neq\emptyset.$$ It is an open set since every point $\{W_{\alpha_0},\ldots,W_{\alpha_q}\}\in\mathcal{N}(\mathcal{W})$ has an open neighborhood $2^{\{W_{\alpha_0},\ldots,W_{\alpha_q}\}} \subset\mathcal{N}(\mathcal{W})$.
	\end{enumerate}
\end{ej}
We can define a notion of dimension exactly in the same way it is defined for simplicial complexes. Let $U\subset 2^V_f$. We will say that $U$ has \emph{dimension} $0$ if $U=\emptyset$, \emph{dimension} $n$ if $U\subset 2^V_{n+1}\backslash2^V_{n}$ and \emph{dimension} $\infty$ if $U\not\subset 2^V_n$ for every $\todon$. We will say that $U$ is a \emph{finite simplicial neighborhood} if it is finite as a set. Given any simplicial neighborhood $U\subset 2^V_f$, a \emph{simplicial subneighborhood} $W\subset U$ is just an open space contained in $U$. We will say that a simplicial subneighborhood $W\subset U$ is \emph{full} if, for every $C\in U$ satisfying $2^C\cap 2^V_1\subset W$, the closures in both spaces are the same, $\overline{\{C\}}^U=\overline{\{C\}}^W$.

\begin{ej}More examples from \cite{Salgebraic}.
	\begin{enumerate}[resume,noitemsep,nolistsep]
		\item For every $q\in\mathbb{N}$, the $q$-skeleton $U_q$ is a simplicial subneighborhood of $U\subset 2^V_f$. For $p\leqslant q$, $U_p$ is a simplicial subneighborhood of $U_q$.
		\item For every $C$ in a simplicial neighborhood $U\subset 2^V_f$, we have that $\dot{C}\subset\overline{C}\subset U$ are simplicial subneighborhoods.
		\item Consider a family $\{U_j\}_{j\in J}$ of simplicial subneighborhoods of a simplicial neighborhood $U$. Then the union $\bigcup_{j\in J} U_j$ and the intersection $\bigcap_{j\in J} U_j$ are simplicial subneighborhoods of $U$.
		\item For $A\subset X$, and $\mathcal{W}=\{W_{\alpha}\}$ with $W_{\alpha}\subset X$, the nerve of $A$, defined as $\mathcal{N}_A(\mathcal{W})=\mathcal{N}(\mathcal{W})\cap 2^A$, is a simplicial subneighborhood of $\mathcal{N}(\mathcal{W})$.
	\end{enumerate}
\end{ej}
Given a simplical map between simplicial complexes, $\varphi:K_1\longrightarrow K_2$, we can define a map between the corresponding simplicial neighborhoods $\psi:U_1\longrightarrow U_2$ as the map $\psi=\mathcal{X}(\varphi)$. That is, the map between the simplicial neighborhoods is defined as \mapeo{\psi}{U_1}{U_2}{\{v_0,\ldots,v_n\}}{\{\varphi(v_0),\ldots,\varphi(v_n)\}} This map is obviously continuous (as seen in \cite{Mfinitecomplexes}). Moreover, it is an open map, because, for every $C\in U_1$, $\psi(2^C)=2^{\psi(C)}$. So, it is evident that the application $\mathbf{\mathcal{X}}$ is a covariant functor between the category of simplicial complexes and simplicial maps and the category of simplicial neighborhoods of hyperspaces (with the upper semifinite topology) and continuous and open maps between them. Hence, we can define a \emph{hypersimplicial map} between simplicial neighborhoods as a continuous and open map between them.

We will define now an inverse of this functor. 
For every simplicial neighborhood $U\subset2^V_f$, we define a simplicial complex $K=\mathcal{Y}(U)$ as follows. The vertices of $K$ are the points of $U\cap 2^V_1$ and the simplices are the points of $U\backslash2^V_1$. In this way, every face of a simplex $\tau\subset\sigma$ is a simplex, since, as points in $U$, if $\sigma\in U$, then $2^{\sigma}\subset U$, because $U$ is open. For every hypersimplicial map between simplical neighborhoods $\psi:U_1\rightarrow U_2$, we define a simplicial map between the simplicial complexes $\mathcal{Y}(\psi):\mathcal{Y}(U_1)\rightarrow\mathcal{Y}(U_2)$ as $\mathcal{Y}(\psi)(v_1)=v_2$, where $\psi(\{v_1\})=\{v_2\}$. It is a simplicial map: For every $C\in U$, we have that $\{C\}$ is open in $U$ if and only if $C$ consists of only one element $C=\{v\}$ (because its minimal neighborhood $2^C$ has to be only $C$). So, it is clear that $\mathcal{Y}(\psi)$ sends vertices to vertices. Moreover, $\psi$ is continuous, so $C\subset D$ implies $\psi(C)\subset\psi(D)$, and that ensures that the induced map $\mathcal{Y}(\psi)$ sends simplices to simplices. These functors are mutually inverse, so we have the following.
\begin{cor}\label{cor:equivalent}
	The category of simplicial neighborhoods and hypersimplicial maps is equivalent to the category of abstract simplicial complexes and simplicial maps.
\end{cor}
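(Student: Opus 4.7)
The plan is to show that $\mathcal{X}$ and $\mathcal{Y}$ are mutually quasi-inverse functors, which is the same as proving the equivalence of categories. The text has already established that $\mathcal{X}$ is a functor; checking that $\mathcal{Y}$ is a functor is routine, since compositions of continuous open maps remain continuous and open and identities are preserved.

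First I would verify that $\mathcal{Y}\circ\mathcal{X}\cong\mathrm{Id}$. Given a simplicial complex $K$ with vertex set $V$, the simplicial neighborhood $\mathcal{X}(K)$ has as elements exactly the simplices of $K$ viewed as non-empty subsets of $V$, so $\mathcal{Y}(\mathcal{X}(K))$ recovers $K$ via the canonical bijection $\{v\}\leftrightarrow v$ between the singletons in $\mathcal{X}(K)$ and the vertices of $K$. Chasing the definition on morphisms yields $\mathcal{Y}(\mathcal{X}(\varphi))=\varphi$ immediately.

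For $\mathcal{X}\circ\mathcal{Y}\cong\mathrm{Id}$, a simplicial neighborhood $U\subset 2^V_f$ gets identified with $\mathcal{X}(\mathcal{Y}(U))$ via the same bijection extended to finite subsets. The real content is to show that every hypersimplicial map $\psi:U_1\to U_2$ satisfies $\psi(C)=\bigcup_{v\in C}\psi(\{v\})$, so that $\psi$ is fully recovered from its vertex restriction. The inclusion $\supseteq$ is immediate from continuity (order preservation). For the reverse inclusion, given $w\in\psi(C)$, the minimal open neighborhood of $\psi(C)$ in $U_2$ is $2^{\psi(C)}\cap U_2$ and it contains $\{w\}$. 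Since $\psi$ is open, $\psi(2^C\cap U_1)$ is open in $U_2$ and contains $\psi(C)$, hence contains $2^{\psi(C)}\cap U_2$; so $\{w\}=\psi(E)$ for some non-empty $E\subset C$ in $U_1$. Continuity applied to each $\{v\}\subset E$ then forces $\psi(\{v\})=\{w\}$, placing $w$ in the required union.

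I expect the main obstacle to be precisely this last step and the essential role of openness: a merely continuous (i.e.\ order-preserving) map between hyperspaces need not be determined by its values on singletons, as simple ``collapsing'' examples like $\psi(\{a\})=\psi(\{b\})=\{x\}$, $\psi(\{a,b\})=\{x,y\}$ show. Openness is exactly the extra condition that rules these out and makes the vertex-level data determine $\psi$. Once the identity $\psi=\mathcal{X}(\mathcal{Y}(\psi))$ is established under the canonical identification, naturality of the unit and counit follows automatically from the explicit formulas defining $\mathcal{X}$ and $\mathcal{Y}$.
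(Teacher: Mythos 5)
Your proof is correct and follows the same route as the paper: exhibiting $\mathcal{X}$ and $\mathcal{Y}$ as mutually quasi-inverse functors. The paper merely asserts that "these functors are mutually inverse," whereas you supply the one genuinely non-trivial verification --- that openness forces $\psi(C)=\bigcup_{v\in C}\psi(\{v\})$, so a hypersimplicial map is recovered from its restriction to singletons --- and your argument for it (using $2^{\psi(C)}\subset\psi(2^C\cap U_1)$ together with continuity on singletons, plus the collapsing counterexample showing openness is indispensable) is sound.
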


\paragraph{Universality for shape properties}
We can use subsets of a given compact metric space, lying in the hyperspace, to determine the shape properties of the space that are encoded in some way in the hyperspace. For details about shape theory, we recommend \cite{MSshape}. Recall, from \cite{MGhomotopical}, that for every compact metric space \cms we can define, for every $\epsilon>0$, the subsets of $X$ consisting of 
$$U_{\epsilon}=\conjunto{C\subset X \text{ closed}:\diam(C)<\epsilon}\subset 2^X_u.$$
In that paper it is shown that the family $\conjunto{U_{\epsilon}}_{\epsilon>0}$ is a base of open neighborhoods of the canonical copy of $X$ for the topology of $2^X_u$. We can define these sets still in the hyperspaces $\hiper$ (although the closed condition is unnecessary) and they are also open, because for every $\epsilon>0$ and every $C\in U_{\epsilon}$, we have $2^C\subset U_{\epsilon}$. We can use these sets to determine the shape of $X$ and hence the \v{C}ech homology and every shape property.
\begin{prop}\label{prop:univshape}
	Let \cms be a compact metric space. There exists an inverse system of subspaces of $\hiperf$ such that their McCord associated inverse system is an HPol expansion of $X$. 
\end{prop}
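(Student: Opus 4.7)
The plan is to build the inverse system from finite $\epsilon$-approximations of $X$, in the spirit of the reconstruction described in \cite{MMreconstruction, phMondejar, MPoly}, and then identify it with a system of simplicial neighborhoods of the type introduced in Section \ref{sec:simplicial}, so that Corollary \ref{cor:equivalent} converts it into an inverse system of polyhedra in HPol.

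First I would fix a decreasing sequence $\epsilon_n\searrow 0$ with $2\epsilon_{n+1}<\epsilon_n$ and, using compactness of $X$, choose a finite $\epsilon_n$-dense subset $A_n\subset X$. Taking $X$ with the discrete topology as the ambient set, the hyperspace $2^{(A_n)_d}_f$ embeds as an open subset of $\hiperf$. Inside it I consider the simplicial neighborhood $U_n$ defined as the nerve of the cover $\{\ball{a}{\epsilon_n}:a\in A_n\}$, namely $\{a_{i_0},\dots,a_{i_k}\}\in U_n$ iff $\bigcap_j \ball{a_{i_j}}{\epsilon_n}\neq\emptyset$. Every point $C\in U_n$ has minimal neighborhood $2^C\subset U_n$, so $U_n$ is indeed open in $\hiperf$, i.e.\ a legitimate simplicial neighborhood.

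Next I would define bonding maps $p_{n,m}:U_m\to U_n$ for $m\geq n$. For each $a\in A_m$, fix a choice $\pi_{n,m}(a)\in A_n$ with $\dist{a}{\pi_{n,m}(a)}<\epsilon_n$ (possible by $\epsilon_n$-denseness), and set $p_{n,m}(C)=\pi_{n,m}(C)$. The condition $2\epsilon_m<\epsilon_n$ guarantees that $p_{n,m}(C)\in U_n$ whenever $C\in U_m$, and since $C\subset C'$ implies $p_{n,m}(C)\subset p_{n,m}(C')$, these maps are continuous and open by the characterization preceding Definition of hypersimplicial maps. This gives an inverse system $\{U_n,p_{n,m},\mathbb{N}\}$ of subspaces of $\hiperf$. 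Applying the Alexandroff--McCord functor $\mathcal{K}$ pointwise produces an inverse sequence of polyhedra $\{|\mathcal{K}(U_n)|,|\mathcal{K}(p_{n,m})|\}$ in HPol; up to the natural simplicial isomorphism, these are the \v{C}ech nerves of the coverings $\{\ball{a}{\epsilon_n}\}$.

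Finally, I would produce canonical maps $q_n:X\to |\mathcal{K}(U_n)|$ using a partition of unity subordinate to the cover $\{\ball{a}{\epsilon_n}\}$, so that the triangle $|\mathcal{K}(p_{n,m})|\circ q_m\simeq q_n$ holds (the homotopies come from the fact that for $x\in X$, both $q_n(x)$ and $|\mathcal{K}(p_{n,m})|(q_m(x))$ lie in a common simplex, namely the image of a simplex of $U_m$ containing $x$ in its support). The hard part, which is the main obstacle, is to verify the two expansion axioms (E1) and (E2) of \cite{MSshape} for the system $\{q_n\}$: every map $f:X\to P$ into an ANR/polyhedron must factor up to homotopy through some $q_n$, and two such factorizations must agree at a later stage. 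The standard route is to choose $n$ so that the cover $\{\ball{a}{\epsilon_n}\}$ refines a cover $\{f^{-1}(W)\}$ given by an open star cover of $P$, and then use the simplicial approximation theorem; but rather than redo this, I would invoke \cite{MPoly} directly, where precisely this nerve-of-discrete-approximations system is already shown to represent the shape of $X$, so the nontrivial content reduces to matching definitions between the simplicial neighborhoods of Section \ref{sec:simplicial} and the polyhedral expansions used there.
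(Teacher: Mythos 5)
Your route is genuinely different from the paper's. The paper keeps the \emph{whole} space $X$ (discretized) as vertex set and takes, for each $\epsilon>0$, the open subspace $U_{\epsilon}^f=\conjunto{C\in\hiperf:\diam(C)<\epsilon}$; the bonding maps are the \emph{inclusions} $U_{\epsilon'}^f\hookrightarrow U_{\epsilon}^f$, so no choices are involved, and Corollary 7 of \cite{MCLepsilon} identifies $\mathcal{K}(U_{\epsilon}^f)$ with the barycentric subdivision of the Vietoris--Rips complex $\mathcal{R}_{\epsilon}(X)$, whence the McCord system is the Vietoris system, a known HPol expansion. You instead build a \v{C}ech-type system on finite $\epsilon_n$-dense subsets $A_n$, with nerves of ball covers and projection-induced bonding maps, and then appeal to \cite{MPoly}. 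Both strategies outsource the expansion axioms to the literature; what the paper's diameter condition and inclusion bondings buy is freedom from the choice and contiguity bookkeeping that your finite-approximation system requires (different choices of $\pi_{n,m}$ give only contiguous, hence homotopic, simplicial maps, so the system is well defined in HPol but not on the nose --- you would need to say this explicitly). What your version buys is finiteness of each term, which the paper's $U_{\epsilon}^f$ does not have.

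There is, however, a concrete gap in your bonding maps as written. If $C=\conjunto{a_0,\ldots,a_k}\in U_m$ and $x\in\bigcap_j\ball{a_j}{\epsilon_m}$, then $\dist{x}{\pi_{n,m}(a_j)}<\epsilon_m+\epsilon_n$, which exceeds $\epsilon_n$; indeed $\dist{\pi_{n,m}(a_i)}{\pi_{n,m}(a_j)}$ can be as large as $2\epsilon_m+2\epsilon_n>2\epsilon_n$, so the balls $\ball{\pi_{n,m}(a_j)}{\epsilon_n}$ need not meet and $p_{n,m}(C)$ need not lie in $U_n$. The hypothesis $2\epsilon_m<\epsilon_n$ does not repair this, because the error term $\epsilon_n$ comes from the $\epsilon_n$-denseness of $A_n$, not from the scale $\epsilon_m$. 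The standard fix is to decouple density from nerve radius: take $A_n$ to be $\gamma_n$-dense with $\gamma_n+\epsilon_{n+1}<\epsilon_n$, or keep $A_n$ $\epsilon_n$-dense but define $U_n$ as the nerve of $\conjunto{\ball{a}{2\epsilon_n}:a\in A_n}$ with $2\epsilon_m\leqslant\epsilon_n$; this is exactly the bookkeeping of \cite{MMreconstruction}. With that correction, and with the contiguity remark above made explicit, your argument goes through as a valid alternative proof.
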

\begin{proof}
	We consider, for every $\epsilon>0$, the intersection $$U_{\epsilon}^f=U_{\epsilon}\cap\hiperf,$$ which is open, $T_0$ and Alexandroff. For every pair $\epsilon'<\epsilon$ we have the inclusion map $$i_{\epsilon,\epsilon'}:U_{\epsilon'}^f\longrightarrow U_{\epsilon}^f.$$ So, we can consider the inverse system of the McCord associated polyhedra and maps, $$\conjunto{|\mathcal{K}(U_{\epsilon}^f)|,|\mathcal{K}(i_{\epsilon,\epsilon'})|,\mathbb{R}}.$$
	As it is shown in Corollary 7 of \cite{MCLepsilon} (for a finite set of vertices but the same proof extends to an infinite set of vertices), the simplicial complex $\mathcal{K}(U_{\epsilon}^f)$ is isomorphic to the barycentric subdivision of the Vietoris-Rips complex $\mathcal{R}_{\epsilon}(X)$, so their realizations are homeomorphic. So, this inverse system is isomorphic to the Vietoris system (see \cite{MSshape} for a description), which is an HPol-expansion of $X$.
\end{proof}
The same construction can be done for every different metric (generating the same topology or not) in the set $X$. So, in that sense, $\hiperf$ is universal for every shape property of every possible metric given over $X$. This should be extendable to every topology on the set $X$ using refinements.

Note that we have encoded all the shape information of a compact metric space (actually all the shape information of every possible metric on the set that makes it compact) in terms of the category of simplicial neighborhoods and hypersimplicial maps.

\appendix
\section{Proof of Theorem \ref{teo:inverseextension} in the countable case}\label{app:countable}
\begin{teo}
	The Alexandroff extension of $\hipernf$ is homeomorphic to an inverse limit of an inverse sequence of finite spaces.
\end{teo}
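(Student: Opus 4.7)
The plan is to mirror the general proof of Theorem \ref{teo:inverseextension} but replace the directed set $\hipernf$ of finite subsets by a cofinal sequence $X_n=\{1,2,\ldots,n\}$, so that the inverse system becomes an inverse sequence. First I would set up the inverse system: for each $n\in\mathbb{N}$, take the finite space $2^{X_n}$ (which, since $X_n$ is finite and discrete, carries its unique Alexandroff upper semifinite topology), and for $n\leqslant m$ define the bonding map $p_{n,m}:2^{X_m}\rightarrow 2^{X_n}$ by
\begin{equation*}
p_{n,m}(D)=\left\{\begin{array}{ll} D & \text{if } D\subset X_n,\\ X_n & \text{if } D\not\subset X_n,\end{array}\right.
\end{equation*}
and verify continuity and the compatibility $p_{n,m}\circ p_{m,k}=p_{n,k}$ for $n\leqslant m\leqslant k$ exactly as in the general proof. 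Let $\mathcal{X}=\varprojlim\{2^{X_n},p_{n,m},\mathbb{N}\}$.

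Next I would define the candidate homeomorphism $h:\hipernf\cup\{\mathbb{N}\}\to\mathcal{X}$ by $\comp{h(\mathbb{N})}{n}=X_n$ for every $n$, and, for finite $D\in\hipernf$,
\begin{equation*}
\comp{h(D)}{n}=\left\{\begin{array}{ll} D & \text{if } D\subset X_n,\\ X_n & \text{if } D\not\subset X_n.\end{array}\right.
\end{equation*}
The well-definedness (i.e.\ that these threads are coherent under $p_{n,m}$) is a direct case analysis identical to the one in the general proof, noting that for each finite $D$ the thread $\comp{h(D)}{n}$ stabilizes at $D$ for all $n\geqslant\max D$. Continuity at any finite $D$ is handled by observing that basic neighborhoods in $\mathcal{X}$ restrict only finitely many coordinates, and the image $h(2^D)$ sits inside any such neighborhood because $2^D$ is the minimal neighborhood of $D$ in $\hipernf$; continuity at $\mathbb{N}$ is automatic since $\{\mathbb{N}\}\cup\hipernf$ has $\mathcal{X}$ itself as the only neighborhood of $h(\mathbb{N})$ in the Alexandroff-extension topology (by the lemma preceding Proposition \ref{prop:extension}).

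The heart of the argument is showing bijectivity. Injectivity is immediate: given $C\neq D$ in $\hipernf$, pick $n$ with $C\cup D\subset X_n$ and compare the $n$th coordinates; if one of the points is $\mathbb{N}$, comparing any coordinate where the finite set is a proper subset of $X_n$ does the job. For surjectivity I would classify threads $A=(A_n)\in\mathcal{X}$: if some coordinate $A_{n_0}=D\subsetneq X_{n_0}$, coherence under $p_{n_0,m}$ forces $A_m=D$ for all $m\geqslant n_0$, and coherence downwards gives the formula defining $h(D)$, so $A=h(D)$; if no such $n_0$ exists, then $A_n=X_n$ for all $n$, i.e.\ $A=h(\mathbb{N})$. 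Openness is checked on the minimal basis: for $D\in\hipernf$ with $\max D\leqslant n_0$, the image $h(2^D)$ equals the intersection of $\mathcal{X}$ with the cylinder over $2^D$ in coordinate $n_0$.

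The main obstacle, and the only place the countable hypothesis is genuinely used, is this classification of threads of $\mathcal{X}$: it relies on the fact that the sequence $\{X_n\}$ exhausts $\mathbb{N}$ and is totally ordered, so that a ``partial'' thread which is proper at one level is forced to be eventually constant. In the general uncountable case this argument has to be replayed over the directed set $\hiperf$, which is why the proof of Theorem \ref{teo:inverseextension} is notationally heavier even though conceptually identical.
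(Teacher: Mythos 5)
Your proposal is correct and follows essentially the same route as the paper's proof: the same bonding maps $p_{n,m}$ (the paper's $p_{n,n+1}$ with $n+1\in C$ is exactly your $D\not\subset X_n$ case), the same map $h$ sending $\mathbb{N}$ to the thread $(X_1,X_2,\ldots)$ and a finite $D$ to the thread that stabilizes at $D$ from level $m(D)$ on, and the same five verifications. The only point worth tightening is continuity at $\mathbb{N}$: what one actually needs there is the codomain-side fact that every basic neighborhood of $h(\mathbb{N})$ in $\mathcal{X}$ is all of $\mathcal{X}$ (since the minimal open set containing $X_n$ in $2^{X_n}$ is the whole of $2^{X_n}$), which follows from the same finite-coordinate observation you already invoke.
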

\begin{proof}
	We should think about this space as an countable cone over the point $\{1\}$. This allows us to understand what follows. The natural numbers are totally ordered. In this case, there is a sequence of ordered-by-inclusion open sets (of the basis) $$B_{\{1\}}\subset B_{\{1,2\}}\subset B_{\{1,2,3\}}\subset\ldots$$ such that $$\bigcup_{s=2,\ldots,\infty}B_{\{1,\ldots,s\}}=\hipernf.$$ Despite of this, they are not a basis: for example, the point $\{1,4\}\in B_{\{1,2,4\}}$ but it is imposible to find an $s$ such that $\{1,4\}\in B_{\{1,\ldots,s\}}\subset B_{\{1,2,4\}}$. We want to construct an inverse sequence in terms of this ordered chain. We can define a natural map (a kind of ``collapse'' -this is not formal!) from every element of the chain to a lower one. For every $\todon$, define a map $p_{n,n+1}:2^{\{1,\ldots,n,n+1\}}\rightarrow 2^{\{1,\ldots,n\}}$ as 
	\begin{equation*}
		p_{n,n+1} = \left\{
		\begin{array}{lr}
			C & \text{if } n+1\notin C,\\
			\{1,\ldots,n\} & \text{if } n+1\in C.
		\end{array}\right.
	\end{equation*}
	This map is continuous: Suppose we have a pair of points of $2^{\{1,\ldots,n,n+1\}}$, namely $C\subset D$. Then, there are three different cases:
	\begin{itemize}
		\item If $n+1\in C\subset D$, then $p_{n,n+1}(C)=p_{n,n+1}(D)=\{1,\ldots,n\}$.
		\item If $n+1\notin C,D$, then $p_{n,n+1}(C)=C\subset D=p_{n,n+1}(D)$.
		\item If $n+1\notin C$ but $n+1\in D$, then $p_{n,n+1}(C)=C\subset\{1,\ldots,n\}=p_{n,n+1}(D)$.
	\end{itemize}
	So, $p_{n,n+1}$ is continuous for every $n\in\todon$.
	\begin{table}
		$$\xymatrix@R=0.05cm@C=1.2cm{2^{\{1\}} & 2^{\{1,2\}}\ar[l]_{p_{1,2}} & 2^{\{1,2,3\}}\ar[l]_{p_{2,3}} & 2^{\{1,2,3,4\}}\ar[l]_{p_{3,4}} & \ldots \ar[l]_{p_{4,5}}\\
			\{1\} & \{1\}\ar[l] & \{1\}\ar[l] & \{1\}\ar[l] & \ldots \ar[l]\\
			& \{2\}\ar[ul] & \{2\}\ar[l] & \{2\}\ar[l] & \ldots \ar[l]\\
			& \{1,2\}\ar[uul] & \{1,2\}\ar[l] & \{1,2\}\ar[l] & \ldots \ar[l]\\
			&                 & \{3\}\ar[ul] & \{3\}\ar[l] & \ldots \ar[l]\\
			&                 & \{1,3\}\ar[uul] & \{1,3\}\ar[l] & \ldots \ar[l]\\
			&                 & \{2,3\}\ar[uuul] & \{2,3\}\ar[l] & \ldots \ar[l]\\
			&                 & \{1,2,3\}\ar[uuuul] & \{1,2,3\}\ar[l] & \ldots \ar[l]\\
			&                 &                     & \{4\},\{1,4\},\ldots\ar[ul] & \ldots \ar[l]      
		}$$
		\caption{Visualization of the inverse limit of $\conjunto{2^{\{1,\ldots,n\}},p_{n,n+1}}$.}
		\label{tab:limnat} 
	\end{table}	
	Now it makes sense to ask what is the inverse limit of the inverse sequence $$\conjunto{2^{\{1,\ldots,n\}},p_{n,n+1}}$$
	Table \ref{tab:limnat} shows a visualization of the first elements of the sequence. We see that each element $\{a_1,\ldots,a_s\}$ (suppose ordered) of $\hipernf$ is represented in the inverse limit as an element that begins at the $a_s$-th term of the sequence. But there is an element of the inverse sequence that is not any of the previously described, namely $$(\{1\},\{1,2\},\{1,2,3\},\{1,2,3,4\},\ldots).$$ So, we claim that the inverse limit $\mathcal{N}$ of the inverse sequence $$\conjunto{2^{\{1,\ldots,n\}},p_{n,n+1}}$$ is homeomorphic to the subspace $\hipernf\cup\{\mathbb{N}\}\subset\hiper$.
	For every $C\in\hipernf$, let us write the maximum of its elements as $m(C)=\max\conjunto{c_i\in C}$.
	Define the following map, $h:\hipernf\cup\{\mathbb{N}\}\rightarrow\mathcal{N}$, as 
	\begin{align}
		h(\mathbb{N})&=\left(\{1\},\{1,2\},\{1,2,3\},\ldots\right) ,\nonumber\\
		h(C)&=\left(\{1\},\{1,2\},\ldots,\{1,2,\ldots,m(C)-1\},C,C,\ldots\right), \texfor{for every} C\in\hipernf.\nonumber
	\end{align}
	We will show that this map is a homeomorphism between the two spaces.
	\begin{enumerate}
		\item $h$ is well defined: It is obvious that $h(\mathbb{N})\in\mathcal{N}$. And, for every $C\in\hipernf$, $h(C)\in\mathcal{N}$, since
		\begin{itemize}
			\item for $n\leqslant m(C)-2$, $p_{n,n+1}(\{1,\ldots,n,n+1\})=\{1,\ldots,n\}$,
			\item $p_{m(C)-1,m(C)}(C)=\{1,2,\ldots,m(C)-1\}$ and
			\item for $n\geqslant m(C)$, $p_{n,n+1}(C)=C$.
		\end{itemize}
		\item $h$ is injective, as easily checked from the definition.
		\item $h$ is surjective: Consider $\left(C_1,C_2,\ldots,\right)\in\mathcal{N}$. Two possibilities:
		\begin{itemize}
			\item If, for every $n\in\mathbb{N}$, $C_n\neq C_{n+1}$, then $C_n=\{1,2,\ldots,n\}$ for every $n\in\mathbb{N}$ and $h(\mathbb{N})=\left(C_1,C_2,\ldots\right)$.
			\item If there exists $n_0\in\mathbb{N}$, such that, $C_{n_0}=C_{n_0+1}$, let us suppose that it is the minimum satisfying this condition and then: For every $n<n_0$ we have that $C_n\neq C_{n+1}$, so $C_n=\{1,\ldots,n\}$. For every $n>n_0$, $n\notin C_n$, so $C_{n+1}=C_n$. In this case, $h(C_{n_0})=\left(C_1,C_2,\ldots\right)$.
		\end{itemize}
		\item $h$ is continuous: Let us divide the proof in two cases.
		\begin{itemize}
			\item Every open neighborhood $U$ of $h(\mathbb{N})$ must be $$U=\left(2^{\{1\}}\times2^{\{1,2\}}\times\ldots\right)\cap\mathcal{N}=\mathcal{N}$$ and $h\parentesis{\hipernf\cup\{\mathbb{N}\}}\subset U$.
			\item For every $C\in\hipernf$, consider an open neighborhood $W$ of $h(C)$ in $\mathcal{N}$. Then $h(C)\in V\subset W,$ with $$V=\parentesis{2^{\{1\}}\times2^{\{1,2\}}\times\ldots\times2^{\{1,\ldots,m(C)\}}\times2^C\times2^{\{1,\ldots,m(C)+2\}}\times\ldots}\cap\mathcal{N}.$$ The open neighborhood $2^C$ of $C$ satisfies that $h(2^C)\subset V$, since, for every $D\in2^C$, $h(D)\in V$, because $m(D)\leqslant m(C)$.
		\end{itemize}
		\item $h$ is an open map: It sends every basic open set to an open set. Namely, $h(\hipernf\cup\{\mathbb{N}\})=\mathcal{N}$ and, for every $C\in\hipernf$, $$h(2^C)=\parentesis{2^{\{1\}}\times2^{\{1,2\}}\times\ldots\times2^{\{1,\ldots,m(C)\}}\times2^C\times2^{\{1,\ldots,m(C)+2\}}\times\ldots}\cap\mathcal{N}.$$
		\begin{itemize}
			\item[$\subset$] For every $D\in2^C$, $h(D)\in\mathcal{N}$ by definition, and $$h(D)\in\parentesis{2^{\{1\}}\times2^{\{1,2\}}\times\ldots\times2^{\{1,\ldots,m(C)\}}\times2^C\times2^{\{1,\ldots,m(C)+2\}}\times\ldots},$$ because, for $n=2,\ldots,m(D)-1$, $\{1,\ldots,n\}\in2^{\{1,\ldots,n\}}$, for $n=m(D),\ldots,m(C)$, $D\in2^{\{1,\ldots,n\}}$, $D\in2^C$ (case $n=m(C)+1$) and, for $n\geqslant m(C)+2$, $D\in2^{\{1,\ldots,n\}}$.
			\item[$\supset$] Any element $A$ of this set is of the form 
			\begin{align*}A=(p_{1,m(C)+1}(D),p_{2,m(C)+1}(D),
				\ldots,&p_{m(C),m(C)+1}(D),D,\\
				&p^{-1}_{m(C)+1,m(C)+2}(D),\ldots),\end{align*} 
			for some $D\in2^C$. But, it is easy to see that $$A=\parentesis{\{1\},\{1,2\},\ldots,\{1,\ldots,m(D)-1\},D,D,\ldots}=h(D).$$
		\end{itemize}
	\end{enumerate}
	The map $h$, as a continuous open bijection, is a homeomorphism.
\end{proof}

\paragraph{Acknowledgements}
The author is indebted to his thesis supervisor M.A. Morón for his indispensable help and inspiration in exploring the wonderful worlds of non-Hausdorff topologies.

\paragraph{Funding}
This work has been partially supported by the research project PGC2018-098321-B-I00 (MICINN) and the FPI Grant BES-2010-033740 of the project MTM2009-07030 (MICINN).

\addcontentsline{toc}{chapter}{References}
\bibliographystyle{siam}
\bibliography{Universal}
	
\vspace{1cm}
\noindent\textsc{Diego Mondéjar}\\
Departamento de Matemáticas\\
CUNEF Universidad\\
\texttt{diego.mondejar@cunef.edu}

\end{document}